\theoremstyle{plain}
\newtheorem{conjecture}{Conjecture}
\newtheorem{corollary}{Corollary}
\newtheorem{lemma}{Lemma}
\newtheorem{remark}{Remark}
\newtheorem{theorem}{Theorem}
\numberwithin{equation}{section}
\newcommand{\eps}{\epsilon}
\newcommand{\ls}{\lesssim}
\newcommand{\gs}{\gtrsim}
\begin{document}
\title[Restriction of toral eigenfunctions]
 {Restriction of toral eigenfunctions to totally geodesic submanifolds}

\author{Xiaoqi Huang and Cheng Zhang}


\address{Department of Mathematics, Johns Hopkins University, Baltimore, MD 21218, United States}
\email{xhuang49@jhu.edu; czhang67@jhu.edu}


\keywords{}

\dedicatory{}

\begin{abstract}
We estimate the $L^2$ norm of the restriction to a totally geodesic submanifold of the eigenfunctions of the Laplace-Beltrami operator on the standard flat torus $\mathbb{T}^d$, $d\ge2$. We reduce getting correct bounds to counting lattice points in the intersection of some $\nu$-transverse bands on the sphere. Moreover, we prove the correct bounds for rational totally geodesic submanifolds of arbitrary codimension. In particular, we verify the conjecture of Bourgain-Rudnick on $L^2$-restriction estimates for rational hyperplanes. On $\mathbb{T}^2$, we prove the uniform $L^2$ restriction bounds for closed geodesics. On $\mathbb{T}^3$, we obtain explicit $L^2$ restriction estimates for the totally geodesic submanifolds, which improve the corresponding results by Burq-G\'erard-Tzvetkov, Hu, Chen-Sogge.
\end{abstract}

\maketitle
\section{Introduction}
Let $M$ be a smooth Riemannian manifold without boundary of dimension $d\ (d\ge2)$, $\Delta$ the corresponding Laplace-Beltrami operator and $\Sigma$ a smooth embedded submanifold of dimension $k$. Burq, G\'erard and Tzvetkov \cite{BGT2006}, Hu \cite{Hu2009} established bounds for the $L^2$ norm of the restriction of eigenfunctions of $\Delta$ to the submanifold $\Sigma$, showing that if $-\Delta e_\lambda=\lambda^2 e_\lambda$, $\lambda>1$, then there exists a constant $C>0$ such that
\begin{equation}\label{BGT01}
  \|e_\lambda\|_{L^2(\Sigma)}\le C\lambda^\frac{d-k-1}2\|e_\lambda\|_{L^2(M)},\ {\rm when}\ 1\le k\le d-3,
\end{equation}
\begin{equation}\label{BGT02}
  \|e_\lambda\|_{L^2(\Sigma)}\le C\lambda^\frac12(\log\lambda)^\frac12\|e_\lambda\|_{L^2(M)},\ {\rm when}\ k=d-2,
\end{equation}
\begin{equation}\label{BGT03}
  \|e_\lambda\|_{L^2(\Sigma)}\le C\lambda^\frac14\|e_\lambda\|_{L^2(M)},\ {\rm when}\ k=d-1.
\end{equation}
All these estimates are sharp on the standard sphere $S^d$, except for the log loss. See \cite{Tat98} for the first appearance of this type of estimate. For the generalization to quasimodes, one may refer to \cite{Tacy09} and \cite{HT12}.

It was proven by Burq, G\'erard and Tzvetkov \cite{BGT2006} ($d=2$), Hu \cite{Hu2009}  ($d\ge2$), Hassell and Tacy \cite{HT12} that if $\Sigma$ is smooth submanifold of dimension $d-1$ with positive (or negative) definite second fundamental form, then the bound $\lambda^\frac14$ in \eqref{BGT03} can be improved to be $\lambda^\frac16$.  Chen and Sogge \cite{chensogge} proved that if $d=3$ and $\Sigma$ is a geodesic segment, then the factor $(\log\lambda)^\frac12$ in \eqref{BGT02} can be removed. On nonpositively or negatively curved manifolds, the improvements for these restriction estimates can be found in \cite{chensogge}, \cite{xizhang}, \cite{zhang}, \cite{blair} and references therein.

Let $\mathbb{T}^d=\mathbb{R}^d/(2\pi\mathbb{Z})^d$ ($d\ge2$) be the standard
flat torus. In \cite{BGT2006} it is observed that for the flat torus $M=\mathbb{T}^2$, \eqref{BGT03} can be improved to be
\begin{equation}\label{BGTtorus}
  \|e_\lambda\|_{L^2(\Sigma)}\le C_\eps\lambda^\eps\|e_\lambda\|_{L^2(M)},\ \forall\eps>0
\end{equation}
due to the fact that the corresponding bound on the $L^\infty$ norm of eigenfunctions holds. They raise the question whether in \eqref{BGTtorus} the bound $\lambda^\eps$ can be replaced by a constant, that is whether there is a uniform $L^2$ restriction bound. In particular, it is known to experts that if we take $\Sigma$ to be a geodesic segment on the torus, this problem is essentially equivalent to the currently open problem of whether on the circle $|x|=\lambda$, the number of lattice points on an arc of size $\lambda^\frac12$ admits a uniform bound. Furthermore, Bourgain and Rudnick \cite{BR2011} conjecture that for flat torus $M=\mathbb{T}^d\ (d\ge2)$ and real analytic hypersurface $\Sigma$, the bound $\lambda^\frac14$ in \eqref{BGT03} can be replaced by a constant.
\begin{conjecture}[Bourgain-Rudnick \cite{BR2011}]
  Let $d\ge2$ be arbitrary and $\Sigma\subset \mathbb{T}^d$ a real analytic hypersurface. Then, for some constant $C_\Sigma$, all eigenfunctions $e_\lambda$ of $\mathbb{T}^d$ satisfy
  \begin{equation}\label{BR01}
    \|e_\lambda\|_{L^2(\Sigma)}\le C_\Sigma\|e_\lambda\|_{L^2(\mathbb{T}^d)}.
  \end{equation}
  If moreover $\Sigma$ has nowhere vanishing curature and $\lambda>\lambda_\Sigma$, for some $c_\Sigma>0$, also
  \begin{equation}\label{BR02}
     \|e_\lambda\|_{L^2(\Sigma)}\ge c_\Sigma\|e_\lambda\|_{L^2(\mathbb{T}^d)}.
  \end{equation}
\end{conjecture}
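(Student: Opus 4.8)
\medskip
\noindent\textbf{A strategy for the conjecture.}
The plan is to pass to the Fourier side and reduce both \eqref{BR01} and \eqref{BR02} to counting lattice points on the sphere $S_\lambda=\{\xi\in\mathbb R^d:|\xi|=\lambda\}$ inside thin bands transverse to the conormal directions of $\Sigma$. Writing $e_\lambda(x)=\sum_{|n|=\lambda,\,n\in\mathbb Z^d}a_ne^{in\cdot x}$, so that $\|e_\lambda\|_{L^2(\mathbb T^d)}^2=(2\pi)^d\sum_n|a_n|^2$, one has
\begin{equation}\label{pp:quad}
\|e_\lambda\|_{L^2(\Sigma)}^2=\sum_{|n|=|m|=\lambda}a_n\overline{a_m}\,\widehat{d\sigma}(m-n),\qquad \widehat{d\sigma}(w)=\int_\Sigma e^{-iw\cdot x}\,d\sigma(x).
\end{equation}
Covering $\Sigma$ by finitely many real-analytic graph patches and using stationary phase (van der Corput in the nondegenerate directions, Varchenko's theorem at the degenerate ones), $\widehat{d\sigma}(w)$ is $O_N(|w|^{-N})$ unless $w/|w|$ lies within $O(|w|^{-1/2})$ of the conormal variety $\mathcal N_\Sigma\subset S^{d-1}$, where $|\widehat{d\sigma}(w)|\lesssim|w|^{-\beta}$ with $\beta=(d-1)/2$ at points of nonvanishing Gaussian curvature and $\beta>0$ rational, determined by the Newton polyhedron, at degenerate ones. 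Applying the Schur test to \eqref{pp:quad}, splitting $w=m-n$ into dyadic shells $|w|\sim 2^\ell$ and the conormal neighborhood into pieces $B_{\nu,\ell}(n)$ — collars of spherical caps of $S_\lambda$ where $\Sigma$ is curved, genuinely thin $\nu$-transverse slabs near a flat or lower-dimensional part of $\mathcal N_\Sigma$ — reduces \eqref{BR01} to the uniform lattice-point bound
\begin{equation}\label{pp:count}
\sup_{n\in S_\lambda}\ \sup_{\nu\in\mathcal N_\Sigma}\ \sum_{2^\ell\le 2\lambda}2^{-\ell\beta}\,\#\bigl(\mathbb Z^d\cap S_\lambda\cap B_{\nu,\ell}(n)\bigr)\lesssim_\Sigma 1.
\end{equation}

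I would first establish \eqref{pp:count} in the cases needing only classical number theory. When $\Sigma$ is rational — any rational hyperplane, or more generally any rational totally geodesic submanifold — the conormal neighborhood is an $O(1)$-neighborhood of a rational codimension-$r$ subspace, and layer by layer its intersection with $(n+\mathbb Z^d)\cap S_\lambda$ is a bounded number of lattice-point sets on spheres of dimension $r-1$ and radius $\le\lambda$; for $r=1$ each such sphere is a line meeting a sphere, hence at most two points, which yields \eqref{BR01} for rational hyperplanes with a genuine constant, while for $r\ge2$ one obtains the correct — indeed improved — bounds in all codimensions, together with the explicit $\mathbb T^3$ estimates improving \eqref{BGT02}, \eqref{BGT03} and the uniform bound for closed geodesics on $\mathbb T^2$. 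When $d=2$, $\beta$ is arbitrarily close to $\tfrac12$ and the divisor bound $\#(\mathbb Z^2\cap S_\lambda)=O_\epsilon(\lambda^\epsilon)$ already closes \eqref{pp:count} up to a factor $\lambda^\epsilon$ for curves of nonvanishing curvature; removing this factor requires exploiting the oscillation of $\widehat{d\sigma}$, which is the refinement of Bourgain--Rudnick. When $d=3$ and $\Sigma$ is curved, $\beta=1$ and the relevant caps of $S_\lambda\subset\mathbb R^3$ carry few lattice points, which again closes \eqref{pp:count} and recovers the result of Bourgain--Rudnick; the logarithm at $k=d-2$ disappears as in Chen--Sogge.

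For the lower bound \eqref{BR02} with $\Sigma$ curved, I would start from the exact identity following from \eqref{pp:quad},
\begin{equation}\label{pp:lower}
\|e_\lambda\|_{L^2(\Sigma)}^2=|\Sigma|\sum_n|a_n|^2+\sum_{w\ne0}\widehat{|e_\lambda|^2}(w)\,\overline{\widehat{d\sigma}(w)},\qquad \widehat{|e_\lambda|^2}(w)=\sum_{m-n=w}a_m\overline{a_n},
\end{equation}
whose first term equals $(2\pi)^{-d}|\Sigma|\,\|e_\lambda\|_{L^2(\mathbb T^d)}^2$; the goal is to dominate the off-diagonal sum by, say, half of it once $\lambda>\lambda_\Sigma$. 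Splitting at $|w|\le W$ and $|w|>W$: the high range is absorbed using $|\widehat{d\sigma}(w)|\lesssim|w|^{-(d-1)/2}$ and the transverse-band count, which improves on the trivial bound $\sum_{|w|>W}|\widehat{|e_\lambda|^2}(w)|\le\#(\mathbb Z^d\cap S_\lambda)\sum_n|a_n|^2$ by a power of $W$; in the low range only $O_W(1)$ vectors $w$ survive, all near $\mathcal N_\Sigma$, and here nonvanishing curvature and analyticity enter — through the asymptotics $\widehat{d\sigma}(w)=\sum_{\pm}c_\pm(w)e^{\pm i h_\Sigma(w)}|w|^{-(d-1)/2}(1+O(|w|^{-1}))$ and the fact that $\widehat{|e_\lambda|^2}(w)$ for $w$ conormal is itself controlled by a transverse-band count, hence small relative to $\sum_n|a_n|^2$ for large $\lambda$ — to rule out any single $w\ne0$ building up a main-order contribution. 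Choosing $W$ a large constant depending on $\Sigma$ then finishes \eqref{BR02}; this is the Bourgain--Rudnick argument for $d=2,3$, and for $d\ge4$ it rests on the same counting input as the upper bound.

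The main obstacle is $d\ge4$ with $\Sigma$ curved. There the Schur test alone is useless: since $S_\lambda$ carries $\asymp\lambda^{d-2}$ lattice points, most at a generic angle from $n$ and each contributing $\sim\lambda^{-(d-1)/2}$ to \eqref{pp:count}, one gets only $\lambda^{(d-3)/2+\epsilon}$, weaker than \eqref{BGT03} for $d\ge5$ and merely tying it for $d=4$. Reaching a constant requires two ingredients beyond the present method: first, the optimal no-clustering estimate $\#\bigl(\mathbb Z^d\cap S_\lambda\cap B_{\nu,\ell}(n)\bigr)\lesssim_\epsilon\lambda^\epsilon\bigl(1+\mathrm{vol}(B_{\nu,\ell})/\lambda\bigr)$, asserting that the lattice points of $S_\lambda$ do not concentrate in thin transverse slabs beyond what the volume dictates; and second, a genuine bilinear or almost-orthogonality gain over Schur that exploits the oscillation of $\widehat{d\sigma}$ across the bands $B_{\nu,\ell}$ — in effect an $\ell^2$-decoupling for the pair consisting of the sphere of radius $\lambda$ and the conormal cone of $\Sigma$. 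Establishing both in full is essentially equivalent to the conjecture in this range; the same short-arc obstruction already appears in lower dimensions for flat pieces such as irrational geodesic segments. By contrast, the rational-submanifold and low-dimensional cases above are unconditional.
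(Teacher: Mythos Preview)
The statement you are addressing is a \emph{conjecture}, not a theorem: the paper does not prove it, and neither does your proposal. What you have written is an honest strategy sketch that correctly surveys the known landscape --- the Bourgain--Rudnick results for $d=2,3$ with curvature, the reduction to lattice-point counting in transverse bands, and the genuine obstruction for $d\ge4$ --- and you are right that closing the $d\ge4$ curved case would require both a sharp anti-clustering bound on $\mathbb Z^d\cap S_\lambda$ in thin slabs and a bilinear gain over Schur, neither of which is currently available. So as a proof this is incomplete by your own admission, and there is nothing in the paper to compare it against.

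There is, however, a concrete gap even in the cases you describe as unconditional. For a rational hyperplane you argue that the conormal slab meets $S_\lambda$ in a bounded number of lines, each hitting the sphere in at most two points, ``which yields \eqref{BR01} \ldots\ with a genuine constant.'' That is not quite true: plugging the bound of $O(1)$ points per unit band into the Schur/dyadic decomposition \eqref{pp:count} still leaves a sum $\sum_{\ell\le\log\lambda}2^{-\ell\cdot 0}\cdot O(1)$ in each of the $d-1$ directions, producing a $(\log\lambda)^{d-1}$ loss --- exactly the loss the paper incurs in Theorem~\ref{prop3a}. To reach an actual constant the paper does \emph{not} use the size of $\widehat{d\sigma}$ at all; it keeps the oscillatory kernel $\prod_j\frac{\sin((m-n)\cdot u_j)}{(m-n)\cdot u_j}$ and invokes the $\ell^2$-boundedness of the discrete Hilbert transform (Lemma~\ref{hilbert}), after first splitting $\mathcal E$ into two hemispheres so that the map $n\mapsto(q_1n_1+p_1n_d,\ldots,q_{d-1}n_{d-1}+p_{d-1}n_d)$ is injective. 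This is the key idea for the rational case, and it is missing from your outline; you invoke oscillation only for the curved Bourgain--Rudnick refinement, but it is already indispensable in the flat rational case to beat the logarithm.
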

 Bourgain and Rudnick \cite{BR2011} proved this conjecture when $d=2,3$ and  $\Sigma$ is a real analytic hypersurface with non-zero curvature. In higher dimension $d\ge4$, they claimed that if $\Sigma$ is a smooth hypersurface with positive definite second fundamental form, then \eqref{BGT03} can be improved:
 \[\|e_\lambda\|_{L^2(\Sigma)}\le C_\Sigma \lambda^{\frac16-\eps_d}\|e_\lambda\|_{L^2(\mathbb{T}^d)}\]
 for some $\eps_d>0$. Recently, Hezari and Rivi\`ere \cite{Hazari} verified the Bourgain-Rudnick's conjecture for a density one subsequence of eigenfunctions on any smooth hypersurface with nonvanishing principal curvatures.

In this paper we pursue the improvements of \eqref{BGT01}, \eqref{BGT02}, \eqref{BGT03} for the flat torus $M=\mathbb{T}^d\ (d\ge2)$, considering the restriction to totally geodesic submanifolds of arbitrary codimension.  We reduce getting correct bounds to counting lattice points in the intersection of some $\nu$-transverse bands on the sphere. Moreover, we prove the correct bounds for rational totally geodesic submanifolds of arbitrary codimension. In particular, we prove that the conjecture \eqref{BR01} is true if $\Sigma$ is a rational hyperplane.

To state our theorems, we introduce some notations about the lattice points on the sphere. Let $\lambda\gg 1$ and $k\ge1$. Denote the sphere
\[\lambda S^k=\{x\in \mathbb{R}^{k+1}: |x|=\lambda\}.\]
Let $\zeta\in S^k$ be a unit vector. Consider the $\lambda^\frac12$-cap $C(\lambda\zeta,\lambda^\frac12)$ which is the intersection of the sphere $\lambda S^k$ with the ball of radius $\approx \lambda^\frac12$ around $\lambda\zeta$. Let $N_{k,\lambda}$ denote the maximal number of lattice points in the $\lambda^\frac12$-cap of the sphere $\lambda S^k$. Namely,
\[N_{k,\lambda}=\max_{\zeta\in S^k}\# \mathbb{Z}^{k+1}\cap C(\lambda\zeta,\lambda^\frac12).\] The bounds on $N_{k,\lambda}$ have been studied by number theorists. Currently, the best estimates when $k=1,2$ are
\begin{equation}\label{N1}N_{1,\lambda}\ls \log\lambda,\end{equation}
\begin{equation}\label{N2}N_{2,\lambda}\ls \lambda^{\frac12-\eta}, \ \forall \ \eta<\tfrac1{32},\end{equation}
see e.g. \cite[Lemma 2.3]{BR2011}, \cite[Lemma 2.1]{BR2015}. It is expected that the correct bounds are $N_{1,\lambda}\ls 1$ and $N_{2,\lambda}\ls \lambda^\eps$. In higher dimension ($k\gg4$), the distribution of lattice points on the sphere is  close to the uniform distribution and there is a natural estimate $N_{k,\lambda}\ls \lambda^{\frac{k-2}2}$, see e.g. \cite[Appendix A]{BR2011}. Indeed, recall that the number of lattice points in $\lambda S^k$ ($k\ge4$) is $\approx \lambda^{k-1}$. Roughly speaking, the number of lattice points in the $\lambda^\frac12$-cap is
\[\approx\frac{{\rm Area\ of\ the\ cap}}{{\rm Area\ of}\ \lambda S^k}\times \lambda^{k-1}\approx \lambda^{\frac{k-2}2}.\]

Let $d\ge2$. We define a \textbf{unit band} in the sphere $\lambda S^{d-1}$ to be the subset of $\lambda S^{d-1}$ between two parallel hyperplanes with distance $\approx1$. More precisely, for $0\ne u\in\mathbb{R}^d$ and $x_0\in\mathbb{R}^d$, the unit band is defined by
\[B(u,x_0)=\{x\in\lambda S^{d-1}:\Big|\frac{u}{|u|}\cdot (x-x_0)\Big|\ls 1\}.\]

Let $A_{1,d,\lambda}$ be the maximal number of lattice points on a unit band of $\lambda S^{d-1}$. Namely,
\[A_{1,d,\lambda}=\max_{0\ne u\in\mathbb{R}^d,\ x_0\in\mathbb{R}^d}\#\mathbb{Z}^d\cap B(u,x_0).\]We call $k$ bands $B(u_1,x_{01}),...,B(u_k,x_{0k})$  are $\nu_{k,d}$-\textbf{transverse} for some $0<\nu_{k,d}\le 1$ if
 \[\frac{|u_1\wedge\cdot\cdot\cdot\wedge u_k|}{|u_1|\cdot\cdot\cdot|u_k|}\ge \nu_{k,d}.\]
  Here the length of the wedge product
  \[|u_1\wedge\cdot\cdot\cdot\wedge u_k|=\sqrt{det[u_i\cdot u_j]_{k\times k}}\ \ .\]For $d\ge3$, $2\le k\le d-1$ and fixed $\nu_{k,d}$, let  $A_{k,d,\lambda}$ be the maximal number of lattice points in the intersection of $k$ unit bands that are $\nu_{k,d}$-transverse. Specifically, we set $\nu_{k,d}=\frac1{(d-k+1)^{k/2}}$ throughout this paper.

Given fixed $k,d$ and $\nu_{k,d}$, an interesting question is to estimate the size of $A_{k,d,\lambda}$ with respect to $\lambda\gg1$. Clearly $A_{1,d,\lambda}\gs N_{d-1,\lambda}$, since a unit band reduces to a $\lambda^\frac12$-cap when one of the hyperplanes is tangent to the sphere.  In particular, we have $A_{1,2,\lambda}\approx N_{1,\lambda}$. In higher dimensions ($d\ge5$), we may roughly  have $A_{1,d,\lambda}\approx \lambda^{d-3}\gg N_{d-1,\lambda}\approx \lambda^{\frac{d-3}2}$ by recalling the bound $\approx \lambda ^{d-2}$ for the number of lattice points in $\lambda S^{d-1}$ and assuming uniform distribution of lattice points. For $2\le k\le d-1$, the intersection of $k$ transverse unit bands that contains the most lattice points may roughly look like the region in $\lambda S^{d-1}$ with size
\[\approx \underbrace{\lambda\times\cdot\cdot\cdot\times\lambda}_{d-k-1}\times \lambda^\frac12\times\underbrace{1\times\cdot\cdot\cdot\times1}_{k-1}.\]So we may naturally expect that the correct bounds for $A_{k,d,\lambda}$ are
\begin{equation}\label{Ak1}
  A_{k,d,\lambda}\ls \lambda^{d-k-2+\eps},\ {\rm when}\ 1\le k\le d-2,
\end{equation}
\begin{equation}\label{Ak2}
  A_{k,d,\lambda}\ls 1,\ {\rm when}\ k=d-1.
\end{equation}
In particular, if we assume $u_1,...,u_k$ are fixed and rational, we will see that the maximal number of lattice points in the intersection of these $k$ unit bands satisfies the bounds in \eqref{Ak1}, \eqref{Ak2}. See the proof of Theorem \ref{prop7}.

Throughout  this paper, the totally geodesic submanifolds are assumed to be bounded and have fixed unit Hausdorff measure (e.g. length, area,...).  $A\ls B$ ($A\gs B$) means $A\le CB$ ($A\ge cB$) for some positive constants $C,\ c$ independent of $\lambda$. The constants may depend on the fixed parameters, including $d,\ k,\ \eps$. $A\approx B$ means $A\ls B$ and $A\gs B$.

\subsection{2 dimensional case}
\begin{theorem}\label{prop1} Let $\gamma\subset \mathbb{T}^2$ be a geodesic segment. Then \begin{equation}\label{T2}\|e_\lambda\|_{L^2(\gamma)}\ls \sqrt{N_{1,\lambda}}\|e_\lambda\|_{L^2({\mathbb{T}^2})},\end{equation}
where the constant is independent of $\gamma$ and $\lambda$.
Moreover, for  any fixed eigenvalue $\lambda$, there exist  a geodesic segment $\gamma$ and an eigenfunction $e_\lambda$ such that
\begin{equation}\label{T2sharp}\|e_\lambda\|_{L^2(\gamma)}\approx \sqrt{N_{1,\lambda}}\|e_\lambda\|_{L^2({\mathbb{T}^2})}.\end{equation}
This means \eqref{T2} is sharp.
\end{theorem}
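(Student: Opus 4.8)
The plan is to expand $e_\lambda$ in a Fourier series and reduce \eqref{T2} to a lattice‑point count on the circle. Parametrize $\gamma$ by arc length as $\gamma(t)=x_0+t\omega$, $t\in[0,L]$, with $|\omega|=1$, $L\approx1$, and write $e_\lambda(x)=\sum_{\xi\in\mathbb{Z}^2,\,|\xi|=\lambda}a_\xi e^{i\xi\cdot x}$. Then $e_\lambda(\gamma(t))=\sum_\xi b_\xi e^{it\,\xi\cdot\omega}$ with $b_\xi=a_\xi e^{i\xi\cdot x_0}$ and $|b_\xi|=|a_\xi|$, so, expanding the square and using $|\int_0^L e^{its}\,dt|\ls\min(1,|s|^{-1})$ (with the convention that this is $1$ when $s=0$),
\[
\|e_\lambda\|_{L^2(\gamma)}^2=\sum_{\xi,\xi'}b_\xi\overline{b_{\xi'}}\int_0^L e^{it(\xi-\xi')\cdot\omega}\,dt\ls\sum_{\xi,\xi'}|a_\xi||a_{\xi'}|\min\bigl(1,|(\xi-\xi')\cdot\omega|^{-1}\bigr).
\]
Applying $|a_\xi||a_{\xi'}|\le\tfrac12(|a_\xi|^2+|a_{\xi'}|^2)$ and symmetry gives $\|e_\lambda\|_{L^2(\gamma)}^2\ls\bigl(\sup_\xi S(\xi)\bigr)\sum_\xi|a_\xi|^2\approx\bigl(\sup_\xi S(\xi)\bigr)\|e_\lambda\|_{L^2(\mathbb{T}^2)}^2$, where $S(\xi)=\sum_{\xi':\,|\xi'|=\lambda}\min(1,|(\xi-\xi')\cdot\omega|^{-1})$. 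Hence \eqref{T2} will follow once we show $S(\xi)\ls N_{1,\lambda}$, uniformly in $\xi$ and $\omega$; note this argument uses no arithmetic property of $\omega$, so the bound is automatically uniform in $\gamma$.

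The key geometric input is a band count: for $T\ge1$, the number of lattice points of $\lambda S^1$ in a strip $\{\xi':\,|\xi'\cdot\omega-c|\le T\}$ is $\ls\sqrt{T}\,N_{1,\lambda}$. Indeed such a strip meets $\lambda S^1$ in arcs of total length $\ls\sqrt{\lambda T}$ (the extreme case being when the strip is centered near $\pm\lambda\omega$, where it is essentially a $\sqrt{\lambda T}$‑cap), and any arc of length $\sqrt{\lambda T}$ is covered by $\ls\sqrt{T}$ caps of radius $\approx\sqrt\lambda$, each containing at most $N_{1,\lambda}$ lattice points; when $T\gtrsim\lambda$ one simply covers all of $\lambda S^1$ by $\ls\sqrt\lambda$ such caps. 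Now split $S(\xi)$ dyadically: the part with $|(\xi-\xi')\cdot\omega|<1$ is a unit band, hence $\ls N_{1,\lambda}$, while the part with $|(\xi-\xi')\cdot\omega|\in[2^{j-1},2^j)$, $j\ge1$, is at most $2^{1-j}$ times the number of $\xi'$ in a strip of width $\approx 2^j$, hence $\ls 2^{-j}\cdot 2^{j/2}N_{1,\lambda}=2^{-j/2}N_{1,\lambda}$. Summing the geometric series in $j$ gives $S(\xi)\ls N_{1,\lambda}$, which proves \eqref{T2}. The step I expect to require the most care is precisely this band count: it is essential that a strip of width $T$ carries only $\sqrt{T}$ (not $T$) times as many lattice points as a $\sqrt\lambda$‑cap, for otherwise the dyadic sum would produce a spurious $\log\lambda$ factor.

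For the sharpness statement \eqref{T2sharp}, fix the eigenvalue $\lambda^2$, choose $\zeta\in S^1$ realizing the maximum in the definition of $N_{1,\lambda}$, and let $\xi_1,\dots,\xi_N$ ($N=N_{1,\lambda}$) be the lattice points of $\lambda S^1$ in $C(\lambda\zeta,\lambda^{1/2})$. Set $e_\lambda=\sum_{m=1}^N e^{i\xi_m\cdot x}$, an eigenfunction with $\|e_\lambda\|_{L^2(\mathbb{T}^2)}\approx\sqrt N$, and take $\gamma(t)=t\zeta$, $t\in[0,1]$; this is an embedded geodesic segment since a straight segment of length $\le1<2\pi$ cannot self‑intersect on $\mathbb{T}^2$. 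From $|\xi_m-\lambda\zeta|\ls\lambda^{1/2}$ we get $\xi_m\cdot\zeta=\lambda-|\xi_m-\lambda\zeta|^2/(2\lambda)\in[\lambda-O(1),\lambda]$, so all the frequencies $\xi_m\cdot\zeta$ lie in a common interval of length $O(1)$; hence $e_\lambda(\gamma(t))=e^{i\lambda t}\sum_m e^{it(\xi_m\cdot\zeta-\lambda)}$ satisfies $|e_\lambda(\gamma(t))|\ge N/2$ for $t$ in a subinterval of $[0,1]$ of length $\gs1$. Therefore $\|e_\lambda\|_{L^2(\gamma)}^2\gs N^2$, i.e. $\|e_\lambda\|_{L^2(\gamma)}\gs N_{1,\lambda}\approx\sqrt{N_{1,\lambda}}\,\|e_\lambda\|_{L^2(\mathbb{T}^2)}$, which together with \eqref{T2} gives \eqref{T2sharp}.
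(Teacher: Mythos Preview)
Your proof is correct and follows essentially the same route as the paper's. Both arguments expand the square, perform a dyadic decomposition in $|(\xi-\xi')\cdot\omega|$, and use the crucial band count that a strip of width $\approx 2^j$ on $\lambda S^1$ contains $\ls 2^{j/2}N_{1,\lambda}$ lattice points (the paper phrases this as ``the length of each arc is $\ls\lambda^{1/2}2^{k/2}$''), which makes the dyadic series geometric rather than logarithmic; for sharpness both concentrate the Fourier support on a maximal $\lambda^{1/2}$-cap and align the geodesic with the cap direction, the only cosmetic difference being that the paper evaluates the integral via the sinc kernel while you argue pointwise that $|e_\lambda(\gamma(t))|\gs N$ on a set of length $\gs1$.
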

Therefore $\|e_\lambda\|_{L^2(\gamma)}\ls \sqrt{\log\lambda}\|e_\lambda\|_{L^2(\mathbb{T}^2)}$ by \eqref{N1}. To prove uniform $L^2$ geodesic restriction bound  on $\mathbb{T}^2$ is equivalent to prove $N_{1,\lambda}\ls1$, which is a currently open problem. Of course, this result is already known to experts (see e.g. \cite[page 1]{BR2011}), but we also give a short proof here for the sake of completeness. Note that the closed geodesics on flat torus are exactly the straight lines with rational slopes.  If we only consider the closed geodesics, we can get correct estimates.
\begin{theorem}\label{prop2} If $\gamma$ is a geodesic segment in $\mathbb{T}^2$ with fixed rational slope, then
\begin{equation}\label{T2rational}\|e_\lambda\|_{L^2(\gamma)}\ls \|e_\lambda\|_{L^2({\mathbb{T}^2})},\end{equation}
where the constant may depend on the slope but it is independent of $\lambda$. 
\end{theorem}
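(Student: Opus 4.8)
The plan is to turn the restriction integral into a one-dimensional exponential sum and exploit the fact that, along a rational direction, the frequencies appearing in that sum are quantized. Write the eigenfunction as $e_\lambda(x)=\sum_{|n|=\lambda}a_n e^{in\cdot x}$ with $n\in\mathbb{Z}^2$, so that $\|e_\lambda\|_{L^2(\mathbb{T}^2)}^2\approx\sum_n|a_n|^2$ by Parseval. Lift $\gamma$ to $\mathbb{R}^2$ and parametrize it by arclength as $\gamma(t)=x_0+t\omega$, $t\in[0,L]$, where $L\approx1$ and, since the slope is fixed and rational, $\omega=(q,p)/\sqrt{p^2+q^2}$ with $p,q\in\mathbb{Z}$. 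Then $e_\lambda(\gamma(t))=\sum_{|n|=\lambda}a_n e^{in\cdot x_0}e^{i(n\cdot\omega)t}$, and the crucial observation is that $n\cdot\omega=(n_1 q+n_2 p)/\sqrt{p^2+q^2}$ takes values in $\delta\mathbb{Z}$ with $\delta:=1/\sqrt{p^2+q^2}$ a positive constant depending only on the slope.

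Next I would group the frequencies. Setting $\mu_m=m\delta$ and $b_m=\sum_{n:\,|n|=\lambda,\ n\cdot\omega=\mu_m}a_n e^{in\cdot x_0}$, we have $e_\lambda(\gamma(t))=\sum_m b_m e^{i\mu_m t}$, a sum over the $\delta$-separated real frequencies $\{\mu_m\}$. A large-sieve (Montgomery--Vaughan type) inequality on an interval of length $L$ then gives
\[\int_0^L\Big|\sum_m b_m e^{i\mu_m t}\Big|^2\,dt\ls (L+\delta^{-1})\sum_m|b_m|^2\ls\sum_m|b_m|^2,\]
where the implied constant depends only on $L$ and $\delta$, hence only on the slope.

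Finally I would estimate the coefficients $b_m$. The set $\{n\in\mathbb{Z}^2:\ |n|=\lambda,\ n\cdot\omega=\mu_m\}$ is the intersection of the circle $\lambda S^1$ with a line, so it contains at most two lattice points; by Cauchy--Schwarz, $|b_m|^2\le 2\sum_{n:\,n\cdot\omega=\mu_m}|a_n|^2$, and summing over $m$ gives $\sum_m|b_m|^2\le 2\sum_n|a_n|^2$. Chaining the three steps yields $\|e_\lambda\|_{L^2(\gamma)}^2\ls\sum_n|a_n|^2\approx\|e_\lambda\|_{L^2(\mathbb{T}^2)}^2$, which is the claim. There is no serious obstacle here: the entire point is that rationality of the slope forces the values $n\cdot\omega$ to be separated by a fixed amount, eliminating the small-gap phenomenon responsible for the logarithm in \eqref{N1} for general geodesics. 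The only things to watch are that every constant depends on $p,q$ but not on $\lambda$, and that a segment of fixed length $L$ may be split into $O(L)$ unit-length pieces if one prefers to run the large sieve on $[0,1]$.
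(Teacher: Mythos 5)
Your proof is correct, and it reaches the same estimate as the paper's by a noticeably cleaner route. The paper computes the kernel $\int_\gamma e^{i(m-n)\cdot x}\,d\sigma$ explicitly to get the $\tfrac{\sin u}{u}$ factor, splits the lattice set $\mathcal{E}$ into the two hemicircles $L^+$ and $L^-$ cut out by the perpendicular bisector (so that each chord $qn_1+pn_2=t$ meets each half in at most one point), and then bounds the resulting bilinear form via the $l^2$-boundedness of the discrete Hilbert transform, packaged as the operator $T_{1/q}$ in Lemma~\ref{hilbert}. You instead recognize $t\mapsto e_\lambda(\gamma(t))$ as a one-dimensional nonharmonic trigonometric polynomial whose frequencies $n\cdot\omega$ live on the arithmetic progression $\delta\mathbb{Z}$, $\delta=(p^2+q^2)^{-1/2}$, and apply the Montgomery--Vaughan large-sieve bound $\int_0^L|\sum_m b_m e^{i\mu_m t}|^2\,dt\ls (L+\delta^{-1})\sum_m|b_m|^2$ for $\delta$-separated frequencies; the ``line meets circle in at most two points'' fact then enters only through the trivial Cauchy--Schwarz bound $\sum_m|b_m|^2\le 2\sum_n|a_n|^2$, so no hemisphere decomposition is needed. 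The underlying analytic input is the same --- the standard proof of Montgomery--Vaughan rests on precisely the Hilbert-type inequality that is the content of Lemma~\ref{hilbert} --- but your packaging eliminates the $L^\pm$ bookkeeping and makes the role of rationality transparent: it quantizes the restricted frequencies, forcing a $\delta$-gap that kills the small-denominator losses. Both arguments produce a constant of size $\approx(p^2+q^2)^{1/4}\approx\sqrt{\max\{|p|,|q|\}}$, matching the paper's remark about the height of the slope.
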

\begin{corollary}If $\Sigma$ is a closed geodesic in $\mathbb{T}^2$, then
\begin{equation}\|e_\lambda\|_{L^2(\Sigma)}\le C_\Sigma \|e_\lambda\|_{L^2({\mathbb{T}^2})},\end{equation}
where the constant $C_\Sigma$ is independent of $\lambda$.
\end{corollary}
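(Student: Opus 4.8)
The plan is to deduce this immediately from Theorem \ref{prop2}, since a closed geodesic on $\mathbb{T}^2=\mathbb{R}^2/(2\pi\mathbb{Z})^2$ is precisely the projection of a straight line with rational slope (allowing the vertical direction). First I would fix such a $\Sigma$ and record its slope $p/q$ (in lowest terms) and its total length $\ell_\Sigma$, both of which are geometric invariants of $\Sigma$ independent of $\lambda$. The point is that $\Sigma$, regarded as a curve of finite length, is itself a union of geodesic segments all having this one fixed rational slope.

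Next I would write $\Sigma=\bigcup_{j=1}^{M}\gamma_j$, where $M=\lceil \ell_\Sigma\rceil$ and each $\gamma_j$ is a geodesic segment of length at most $1$ carrying the same rational slope as $\Sigma$; here $M$ depends only on $\Sigma$. Then by additivity of the $L^2$ mass over the pieces,
\[
\|e_\lambda\|_{L^2(\Sigma)}^2\le \sum_{j=1}^{M}\|e_\lambda\|_{L^2(\gamma_j)}^2 .
\]
Since all of the $\gamma_j$ share the same fixed rational slope, Theorem \ref{prop2} applies to each of them with one and the same implicit constant $C$ (depending on $p/q$ but not on $j$ or $\lambda$), giving $\|e_\lambda\|_{L^2(\gamma_j)}^2\le C^2\|e_\lambda\|_{L^2(\mathbb{T}^2)}^2$. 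Summing over $j$ and taking square roots yields
\[
\|e_\lambda\|_{L^2(\Sigma)}\le \sqrt{M}\,C\,\|e_\lambda\|_{L^2(\mathbb{T}^2)},
\]
so one may take $C_\Sigma=\sqrt{M}\,C$, which depends only on $\Sigma$ (through $\ell_\Sigma$ and its slope) and not on $\lambda$.

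There is no genuine obstacle at this stage: the entire analytic content lies in Theorem \ref{prop2}, and the corollary is just a matter of subdividing a closed geodesic into finitely many rational-slope segments of bounded length and summing. The only point worth a line of care is that the number of pieces $M$ and the constant $C$ from Theorem \ref{prop2} are both uniform in $\lambda$, which is immediate from the statement of that theorem.
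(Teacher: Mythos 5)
Your argument is correct and is essentially the same as the paper's intended derivation: the paper states the corollary directly after Theorem~\ref{prop2} with no separate proof, treating it as an immediate consequence of the fact that closed geodesics on $\mathbb{T}^2$ are exactly the projections of lines with rational slope, combined with the observation that the constant in Theorem~\ref{prop2} depends only on the slope. Your decomposition of $\Sigma$ into $M=\lceil\ell_\Sigma\rceil$ unit-length pieces, applying Theorem~\ref{prop2} to each (noting also that translating a segment off the origin is harmless, since it only multiplies the Fourier coefficients $c_n$ by unimodular phases $e^{in\cdot x_0}$ and so does not change the $\ell^2$ norm entering the proof of Theorem~\ref{prop2}), and summing, is precisely the routine bookkeeping the paper leaves implicit.
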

If the slope of the straight line is equal to $p/q,\ gcd(p,q)=1$, then the constant in \eqref{T2rational} is $\approx \sqrt{\max\{|p|,|q|\}}$. Here $\max\{|p|,|q|\}$ is exactly the $height$ of the rational number $p/q$. A key idea to prove the uniform bound is using the $l^2$ boundedness of discrete Hilbert transform. We remark that in the second part of Theorem \ref{prop1}, the slope of the geodesic  can be chosen to be a rational number, which may depend on $\lambda$. Of course, one cannot combine \eqref{T2sharp} and \eqref{T2rational} to conclude that $N_{1,\lambda}\ls 1$, since the constant in Theorem \ref{prop2} depends on the slope of the geodesic.

\subsection{Higher dimensional cases}
\begin{theorem}\label{prop3a}Let $d\ge3$.  Let $\Sigma\subset \mathbb{T}^d$ be a totally geodesic submanifold of dimension $k$. Then
\begin{equation}\label{Tdk}\|e_\lambda\|_{L^2(\Sigma)}\ls \sqrt{A_{k,d,\lambda}}(\log\lambda)^{k/2}\|e_\lambda\|_{L^2({\mathbb{T}^d})}.\end{equation}
where the constant is independent of $\Sigma$ and $\lambda$.
Moreover, for  any fixed eigenvalue $\lambda$, there exist  a  totally geodesic submanifold $\Sigma$ and an eigenfunction $e_\lambda$ such that
\begin{equation}\label{Tdksharp}\|e_\lambda\|_{L^2(\Sigma)}\approx \sqrt{A_{k,d,\lambda}}\|e_\lambda\|_{L^2({\mathbb{T}^d})}.\end{equation}
This means \eqref{Tdk} is essentially sharp.
\end{theorem}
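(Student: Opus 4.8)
The plan is to expand $e_\lambda|_\Sigma$ into a Fourier series along $\Sigma$, insert a smooth cutoff to turn its $L^2$ norm into a quadratic form in the Fourier coefficients, apply Schur's test, and then recognize the surviving entries of the associated matrix as a lattice point count governed by $A_{k,d,\lambda}$; the near-extremizer in \eqref{Tdksharp} will be read off directly from an optimal band configuration. Concretely, for \eqref{Tdk} I would first write $e_\lambda(x)=\sum_{\xi\in\mathbb Z^d,|\xi|=\lambda}a_\xi e^{i\xi\cdot x}$, so $\|e_\lambda\|_{L^2(\mathbb T^d)}^2=(2\pi)^d\sum_\xi|a_\xi|^2$, pick an orthonormal basis $v_1,\dots,v_k$ of the tangent $k$-plane of $\Sigma$, and parametrize (after the standard normalization, so the parameter domain $\Omega\subset\mathbb R^k$ has $|\Omega|\approx1$ and $\operatorname{diam}\Omega\lesssim1$) a unit-measure piece of $\Sigma$ by $t\mapsto x_0+\sum_j t_jv_j$, under which the induced measure is $dt$. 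With $b_\xi=a_\xi e^{i\xi\cdot x_0}$ and $\eta(\xi)=(\xi\cdot v_1,\dots,\xi\cdot v_k)\in\mathbb R^k$ the restriction becomes $u(t)=\sum_\xi b_\xi e^{it\cdot\eta(\xi)}$. Fixing $\chi\in C_c^\infty(\mathbb R^k)$ with $\chi\ge\1_\Omega$,
\[
\|e_\lambda\|_{L^2(\Sigma)}^2\le\int|u(t)|^2\chi(t)^2\,dt=\sum_{\xi,\xi'}b_\xi\overline{b_{\xi'}}\,\widehat{\chi^2}\bigl(\eta(\xi')-\eta(\xi)\bigr),
\]
and Schur's test for the symmetric nonnegative matrix $\bigl(|\widehat{\chi^2}(\eta(\xi')-\eta(\xi))|\bigr)_{\xi,\xi'}$ reduces everything to showing $\sup_\xi\sum_{\xi'}|\widehat{\chi^2}(\eta(\xi')-\eta(\xi))|\lesssim A_{k,d,\lambda}(\log\lambda)^k$, the sums running over $\xi,\xi'\in\mathbb Z^d\cap\lambda S^{d-1}$.

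The heart of the matter is this last bound, and this is the step I expect to be the main obstacle. Fixing $\xi$, I would slice $\mathbb R^k$ into unit cubes $Q=\ell+[0,1)^k$, $\ell\in\mathbb Z^k$. The observation that makes $A_{k,d,\lambda}$ appear is that the set $\{\xi'\in\mathbb Z^d\cap\lambda S^{d-1}:\eta(\xi')-\eta(\xi)\in Q\}$ is exactly the set of $\xi'$ on $\lambda S^{d-1}$ with $|v_j\cdot\xi'-c_j|\le\tfrac12$ for every $j$, where $c_j=\xi\cdot v_j+\ell_j+\tfrac12$; hence it is contained in $\bigcap_{j=1}^k B(v_j,c_jv_j)$, an intersection of $k$ unit bands whose normals $v_1,\dots,v_k$ are \emph{orthonormal}, so $|v_1\wedge\cdots\wedge v_k|=1\ge\nu_{k,d}$ and the bands are $\nu_{k,d}$-transverse. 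By the definition of $A_{k,d,\lambda}$ each such cube carries at most $A_{k,d,\lambda}$ lattice points of $\lambda S^{d-1}$. Consequently the $\lesssim(\log\lambda)^k$ cubes meeting $\{|w|\le\log\lambda\}$ contribute $\lesssim(\log\lambda)^kA_{k,d,\lambda}$, while the contribution of the terms with $|\eta(\xi')-\eta(\xi)|>\log\lambda$ is $\lesssim A_{k,d,\lambda}$ by the rapid decay of $\widehat{\chi^2}$ (bounding the lattice count in the annulus $|w|\approx2^m$ by $\lesssim2^{mk}A_{k,d,\lambda}$ and summing $\sum_m2^{mk}2^{-mM}$ for $M>k$). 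This yields $\sup_\xi\sum_{\xi'}|\widehat{\chi^2}(\eta(\xi')-\eta(\xi))|\lesssim(\log\lambda)^kA_{k,d,\lambda}$ and hence \eqref{Tdk}. The remaining ingredients — the choice of cutoff, Schur's test, the dyadic/rapid-decay bookkeeping, and the geometric normalization of $\Omega$ — are routine.

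For the sharpness claim \eqref{Tdksharp} I would take $\mathcal L\subset\mathbb Z^d\cap\lambda S^{d-1}$ realizing $A_{k,d,\lambda}$ inside an intersection of $k$ unit bands with $\nu_{k,d}$-transverse unit normals $u_1,\dots,u_k$, set $W=\operatorname{span}(u_1,\dots,u_k)$, and choose the tangent plane of $\Sigma$ to be $W$, with orthonormal basis $v_1,\dots,v_k$. Since $|u_j\cdot(\xi-\xi')|\lesssim1$ for all $\xi,\xi'\in\mathcal L$ and all $j$, inverting the Gram matrix of the $u_j$ (whose determinant is $\ge\nu_{k,d}^2$) shows $|\eta(\xi)-\eta(\xi')|\le C_0$ with $C_0=C_0(d,k)$. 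Taking $e_\lambda=\sum_{\xi\in\mathcal L}e^{-i\xi\cdot x_0}e^{i\xi\cdot x}$ (so $\|e_\lambda\|_{L^2(\mathbb T^d)}^2\approx A_{k,d,\lambda}$) and $\Sigma$ the embedded, measure $\approx1$ image of $\{x_0+\sum_j t_jv_j:|t|<r_0\}$ with $r_0=r_0(d,k)$ small, one gets, for any fixed $\xi_0\in\mathcal L$,
\[
u(t)=e^{it\cdot\eta(\xi_0)}\sum_{\xi\in\mathcal L}e^{it\cdot(\eta(\xi)-\eta(\xi_0))},
\]
so that $|e^{i\theta}-1|\le|\theta|$ gives $\tfrac12A_{k,d,\lambda}\le|u(t)|\le A_{k,d,\lambda}$ on $\{|t|<r_0\}$, whence $\|e_\lambda\|_{L^2(\Sigma)}^2\approx A_{k,d,\lambda}^2$ and therefore $\|e_\lambda\|_{L^2(\Sigma)}\approx\sqrt{A_{k,d,\lambda}}\,\|e_\lambda\|_{L^2(\mathbb T^d)}$, which is \eqref{Tdksharp}.
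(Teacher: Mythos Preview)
Your argument for both \eqref{Tdk} and \eqref{Tdksharp} is correct and follows the same overall strategy as the paper: expand the restriction as a bilinear form in the Fourier coefficients, bound the kernel entries, and recognize the row sums as lattice-point counts in intersections of $\nu_{k,d}$-transverse unit bands. The paper does this with the explicit sinc kernel $\prod_j\frac{\sin((m-n)\cdot u_j)}{(m-n)\cdot u_j}$ coming from a sharp cutoff on $[-1,1]^k$, a specific (non-orthonormal) basis $u_1,\dots,u_k$ realizing the threshold $\nu_{k,d}$, and a dyadic split in each coordinate bounded via Cauchy--Schwarz; you replace these by a smooth majorant $\chi^2\ge\mathbf 1_\Omega$, an orthonormal tangent basis (which gives $1$-transverse and hence certainly $\nu_{k,d}$-transverse bands), and a unit-cube partition with Schur's test. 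Your sharpness construction is essentially identical to the paper's, with the Gram-matrix inversion replacing the paper's ``without loss of generality'' reduction of $u_1,\dots,u_k$ to a special form.

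One remark worth making: your smooth cutoff actually buys more than you extract from it. You split the Schur sum at $|w|=\log\lambda$ and crudely bound $|\widehat{\chi^2}|\lesssim1$ on the near part, which is what manufactures the $(\log\lambda)^k$; but since $\widehat{\chi^2}$ is Schwartz and $\operatorname{diam}\Omega\lesssim1$ (so the decay constants of $\widehat{\chi^2}$ are purely dimensional), the full sum $\sum_{\ell\in\mathbb Z^k}\sup_{w\in\ell+[0,1)^k}|\widehat{\chi^2}(w)|$ already converges to a constant depending only on $d,k$. Thus your argument in fact yields $\sup_\xi\sum_{\xi'}|\widehat{\chi^2}(\eta(\xi')-\eta(\xi))|\lesssim A_{k,d,\lambda}$ with no logarithm, removing the $(\log\lambda)^{k/2}$ loss that the paper incurs from the $1/|x|$ decay of the sinc kernel---a loss the paper itself remarks should be removable.
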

It is natural to expect that the  factor  $(\log\lambda)^{k/2}$ can be removed. Note that $N_{1,\lambda}\approx A_{1,2,\lambda}$. This theorem agrees with Theorem \ref{prop1} when $d=2$, except for the log loss.  In particular, when $d=3$, by estimating $A_{1,3,\lambda}$ and $A_{2,3,\lambda}$,  we can prove the following explicit estimates.
\begin{theorem}\label{prop4} Let $\Sigma\subset \mathbb{T}^3$ be a totally geodesic submanifold of dimension $k$, $k=1,2$. Then for any $\eps>0$
\begin{equation}\label{T3k1n}\|e_\lambda\|_{L^2(\Sigma)}\ls \lambda^{\frac1{3}+\eps}\|e_\lambda\|_{L^2({\mathbb{T}^3})},\ {\rm when}\ k=1,\end{equation}
\begin{equation}\label{T3k2n}\|e_\lambda\|_{L^2(\Sigma)}\ls \lambda^{\frac1{12}+\eps}\|e_\lambda\|_{L^2({\mathbb{T}^3})},\ {\rm when}\ k=2.\end{equation}
The constants are independent of $\Sigma$ and $\lambda$.
\end{theorem}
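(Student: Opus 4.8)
The plan is to derive Theorem~\ref{prop4} from Theorem~\ref{prop3a} by proving the two lattice point bounds
\[
A_{1,3,\lambda}\ls \lambda^{\frac23+\eps},\qquad A_{2,3,\lambda}\ls \lambda^{\frac16+\eps}\qquad(\forall\,\eps>0).
\]
Inserting these into \eqref{Tdk} with $d=3$ gives $\|e_\lambda\|_{L^2(\Sigma)}\ls \lambda^{\frac13+\frac\eps2}(\log\lambda)^{\frac12}\|e_\lambda\|_{L^2}$ when $k=1$ and $\|e_\lambda\|_{L^2(\Sigma)}\ls \lambda^{\frac1{12}+\frac\eps2}\log\lambda\,\|e_\lambda\|_{L^2}$ when $k=2$, and the logarithmic factors are absorbed after shrinking $\eps$. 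So everything reduces to a lattice point count on $\lambda S^2$.

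The key observation is elementary: \emph{if $\Omega\subset\mathbb{R}^3$ is convex with $\mathrm{vol}(\Omega)<\tfrac16$, then $\#\big(\Omega\cap\lambda S^2\cap\mathbb{Z}^3\big)\ls_\eps\lambda^\eps$}. Indeed, four non-coplanar points of $\mathbb{Z}^3$ span a tetrahedron of volume $\ge\tfrac16$ (six times its volume is a nonzero integer determinant), which cannot fit in $\Omega$; hence all lattice points of $\Omega$ are coplanar, say lying in a plane $P$ — unless they are collinear, in which case at most two lie on $\lambda S^2$ and we are done. Then $\Omega\cap\lambda S^2\cap\mathbb{Z}^3$ sits on the circle $P\cap\lambda S^2$ of radius $\le\lambda$. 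In the applications below $P$ is spanned by integer vectors of length $\ls\lambda^{1/3}$, so its primitive integral normal, and hence the covolume of the rank-$2$ lattice $\mathbb{Z}^3\cap P$, is $\ls\lambda^{2/3}$; the number of points of such a lattice on a circle of radius $\le\lambda$ is $\ls_\eps\lambda^\eps$ by the standard divisor bound for representations by binary quadratic forms.

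It remains to cover the relevant subsets of $\lambda S^2$ by boxes to which the observation applies. Fix $R=c\lambda^{1/3}$ with $c>0$ small. A patch of $\lambda S^2$ of tangential dimensions $\approx R\times1$ deviates from its tangent plane by $\ls R^2/\lambda$, hence lies in a box of dimensions $\approx R\times1\times R^2/\lambda$ and volume $\ls R^3/\lambda\approx c^3<\tfrac16$; so it contains $\ls_\eps\lambda^\eps$ lattice points of $\lambda S^2$. Now a unit band $B(u,x_0)$ is, on $\lambda S^2$, a ribbon (an annular zone, degenerating to a $\ls\lambda^{1/2}$-cap near the axis $\pm\lambda u/|u|$) of area $\ls\lambda$ by Archimedes' hat-box theorem; covering it by $\ls\lambda/R\ls\lambda^{2/3}$ patches of the above type yields $A_{1,3,\lambda}\ls\lambda^{2/3+\eps}$. (Near the axis one could instead invoke $N_{2,\lambda}\ls\lambda^{1/2-\eta}$ from \eqref{N2}, but the covering already suffices.) For two $\nu_{2,3}$-transverse unit bands, intersecting their two $O(1)$-width slabs gives an infinite tube of cross-sectional diameter $\ls1$ (the slabs meet at an angle with sine $\ge\nu_{2,3}$), centered on a line $\ell$ of direction $v=u_1\wedge u_2$; intersecting with $\lambda S^2$ produces at most two ribbons of total tangential length $\ls\lambda^{1/2}$. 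Indeed, writing a tube point as $p+t\,v/|v|+\rho$ with $p\perp v$ the foot of the perpendicular from $0$ to $\ell$ and $|\rho|\ls1$, membership in $\lambda S^2$ forces $t^2=\lambda^2-|p|^2+O(\lambda)$, confining $t$ to at most two intervals of total length $\ls\lambda^{1/2}$. Covering these ribbons by $\ls\lambda^{1/2}/R\ls\lambda^{1/6}$ patches gives $A_{2,3,\lambda}\ls\lambda^{1/6+\eps}$.

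I expect the main obstacles to be geometric rather than arithmetic: verifying that two $\nu_{2,3}$-transverse unit bands meet in a ribbon of length $\ls\lambda^{1/2}$ (the tube--sphere computation above), and checking that the covering patches---being thin pieces of $\lambda S^2$ of width $\approx1$ rather than $\approx R$---fit into boxes of volume $o(1)$; it is exactly this thinness that permits $R$ as large as $\lambda^{1/3}$ and produces the exponents $\tfrac23$ and $\tfrac16$. The only non-elementary input, the $\lambda^\eps$-bound for lattice points on a circle, is uniform in $P$ here because $P$ has a polynomially bounded integral normal.
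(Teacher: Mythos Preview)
Your proposal is correct and follows the same strategy as the paper: reduce via Theorem~\ref{prop3a} to bounding $A_{1,3,\lambda}$ and $A_{2,3,\lambda}$, use the Jarn\'ik-type observation that four non-coplanar lattice points span a tetrahedron of volume $\ge\tfrac16$ to force coplanarity on pieces with small convex hull, and then invoke the $\lambda^\eps$ divisor bound for lattice points on a circle (which the paper records as Lemma~\ref{sphere}). The only differences are in execution---you cover uniformly by $\lambda^{1/3}\times1$ patches and handle the two-band case via tube geometry, whereas the paper splits into three cases according to the band radius (with Lemma~\ref{sector} computing the sector volume in the critical range $R\gs\lambda^{3/4}$) and argues that $\nu$-transversality forces one band to have radius $\approx\lambda$; both routes yield the same exponents.
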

These bounds improve the bounds $\lambda^\frac12$ ($k=1$) and $\lambda^\frac14$ ($k=2$) by Burq-G\'erard-Tzvetkov, Hu, Chen-Sogge. The main idea is generalizing a result of Jarnik \cite{Jar1926} to higher dimensions. We decompose the bands into a number of small pieces, each of which contains at most $\lambda^\eps$ lattice points. All these bounds seem to be far from the correct bounds suggested by \eqref{Ak1}, \eqref{Ak2} and Theorem \ref{prop3a}, namely for any $\eps>0$
 \begin{equation}\label{T3best1}\|e_\lambda\|_{L^2(\Sigma)}\ls \lambda^\eps\|e_\lambda\|_{L^2({\mathbb{T}^3})},\ {\rm when}\ k=1,\end{equation}
\begin{equation}\label{T3best2}\|e_\lambda\|_{L^2(\Sigma)}\ls \|e_\lambda\|_{L^2({\mathbb{T}^3})},\ {\rm when}\ k=2.\end{equation}
Fortunately, these correct bounds can be proved for rational totally geodesic submanifolds (see Theorem \ref{prop7}).
\begin{remark}{\rm
  To get \eqref{T3best1}, one may need to show $A_{1,3,\lambda}\ls \lambda^\eps$. Recall that $A_{1,3,\lambda}\gs N_{2,\lambda}$. So this estimate is much stronger than the estimate $N_{2,\lambda}\ls \lambda^\eps$ for the $\lambda^\frac12$-cap, which is still an open problem. Bourgain, Rudnick and Sarnak \cite[Theorem 2.4]{BRS2016} proved that as a consequence of ``Linnik's basic Lemma'', this holds in the mean square. Indeed, partition the sphere $\lambda S^2$ into sets $C_\alpha$ of size $\lambda^\frac12$, for instance by intersecting with cubes of that size. Denote $\mathcal{E}=\mathbb{Z}^3\cap\lambda S^2$. They proved that
  \[\sum_\alpha[\#(\mathcal{E}\cap C_\alpha)]^2\ls \lambda^{1+\eps},\ \forall\eps>0.\]
  Note that there are $\approx \lambda $ terms in the sum, and $\#\mathcal{E}\ls \lambda^{1+\eps}$. This result expresses a mean-equidistribution property of $\mathcal{E}$.}
\end{remark}
\begin{theorem}\label{prop3}Let $d\ge2$ and $1\le k\le d-1$.
  For  any fixed eigenvalue $\lambda$, there exist  a totally geodesic submanifold $\Sigma$ of dimension $k$  and an eigenfunction $e_\lambda$ such that
\begin{equation}\label{Tdsharp}\|e_\lambda\|_{L^2(\Sigma)}\approx \sqrt{N_{d-k,\lambda}}\|e_\lambda\|_{L^2({\mathbb{T}^d})}.\end{equation}
\end{theorem}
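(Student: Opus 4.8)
The plan is to exhibit, for a given eigenvalue $\lambda$, an explicit eigenfunction concentrated on a $\lambda^{1/2}$-cap of a $(d-k)$-dimensional sphere and a totally geodesic submanifold of dimension $k$ placed so that the restriction picks up exactly that concentration. First I would fix a point $\zeta\in S^{d-k}$ realizing $N_{d-k,\lambda}$, i.e. such that the cap $C(\lambda\zeta,\lambda^{1/2})$ contains $N_{d-k,\lambda}$ lattice points of $\mathbb{Z}^{d-k+1}$. Write coordinates on $\mathbb{T}^d$ as $(x',x'')$ with $x'\in\mathbb{T}^{d-k+1}$ and $x''\in\mathbb{T}^{k-1}$ (the split is chosen so that $d-k+1$ plus $k-1$ equals $d$). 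The eigenfunction will be $e_\lambda(x)=\sum_{\xi'} e^{i\xi'\cdot x'}$, where $\xi'$ ranges over $\mathbb{Z}^{d-k+1}\cap C(\lambda\zeta,\lambda^{1/2})$ together with the single extra coordinate forced by $|\xi|^2=\lambda^2$; one must check $\lambda^2$ is representable and that these frequencies genuinely sit on $\lambda S^{d-1}$, which is arranged by choosing the cap inside the correct sphere. (Alternatively, absorb the constraint by allowing $\lambda^2-|\xi'|^2$ to be a perfect square in the remaining $k-1$ directions; I would simply take those components to be $0$, so $|\xi'|^2=\lambda^2$ exactly, i.e. work on the subsphere $\lambda S^{d-k}\subset\lambda S^{d-1}$.)

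Next I would choose $\Sigma$ to be a $k$-dimensional totally geodesic submanifold transverse to the $x'$-block in a way that makes the restriction diagonal. Concretely, take $\Sigma=\{x'=a'\}\times\mathbb{T}^{k-1}$ for a suitable constant $a'$ — no, that is only $(k-1)$-dimensional, so instead take $\Sigma$ spanned by the last $k-1$ coordinate directions plus one more direction $\omega$ chosen among the $x'$-coordinates so that $\xi\cdot\omega$ is constant (independent of $\xi$) over the chosen frequency set; this is possible because all frequencies lie in a $\lambda^{1/2}$-cap, hence differ by $O(\lambda^{1/2})$, but to get exactly constant phase I would more carefully pick $\Sigma$ to be the $k$-plane through the origin orthogonal to $\zeta$ (intersected with the torus and truncated to unit measure). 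On that plane each exponential $e^{i\xi\cdot x}$ with $\xi$ near $\lambda\zeta$ restricts to a slowly oscillating function of size $\approx 1$ on a unit ball, so $\|e^{i\xi\cdot x}\|_{L^2(\Sigma)}\gtrsim 1$, while the $N_{d-k,\lambda}$ terms add coherently near the center of the cap. A short stationary-phase-free computation (the phases $\xi\cdot x$ vary by $O(\lambda^{1/2}\cdot|x|)$, which is $O(1)$ on the portion of $\Sigma$ of size $\lambda^{-1/2}$) gives $\|e_\lambda\|_{L^2(\Sigma)}\gtrsim N_{d-k,\lambda}\cdot\lambda^{-(d-k-1+\dots)/2}$ against $\|e_\lambda\|_{L^2(\mathbb{T}^d)}\approx N_{d-k,\lambda}^{1/2}$; balancing the powers is where the exponent $\tfrac12$ in \eqref{Tdsharp} emerges, and the matching upper bound $\|e_\lambda\|_{L^2(\Sigma)}\lesssim\sqrt{N_{d-k,\lambda}}\,\|e_\lambda\|_{L^2(\mathbb{T}^d)}$ should follow from the same kind of argument used for \eqref{Tdk} (or be read off from Theorem \ref{prop3a} after checking $A_{k,d,\lambda}\lesssim N_{d-k,\lambda}$ for this particular rational configuration).

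I expect the main obstacle to be the bookkeeping that makes the coherent sum genuinely of size $N_{d-k,\lambda}$ on a set of $\Sigma$ of the right measure, while simultaneously keeping the denominator $\|e_\lambda\|_{L^2(\mathbb{T}^d)}$ at $N_{d-k,\lambda}^{1/2}$: one must verify that the portion of $\Sigma$ on which all phases $e^{i\xi\cdot x}$ are within $O(1)$ of each other has measure bounded below by a constant (after the unit-measure normalization of $\Sigma$), not just $\lambda^{-c}$. This is the place where the totally geodesic hypothesis and the correct choice of the direction of $\Sigma$ relative to the cap center $\zeta$ really matter — choosing $\Sigma$ orthogonal to $\zeta$ ensures that translating along $\Sigma$ moves $x$ in directions nearly orthogonal to every frequency $\xi\approx\lambda\zeta$, so the phase $\xi\cdot x$ indeed stays $O(1)$ on a unit ball in $\Sigma$. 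Once that is set up, the upper bound is the routine direction and the lower bound is an honest pointwise lower estimate at the center, so the theorem follows.
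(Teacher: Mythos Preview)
Your proposal has the right architecture --- place the frequencies on the subsphere $\lambda S^{d-k}\subset\lambda S^{d-1}$ by zeroing out the last $k-1$ coordinates, take $c_n$ constant on a maximal $\lambda^{1/2}$-cap $\mathcal C$, and span $\Sigma$ by the last $k-1$ coordinate axes together with one extra direction $\omega$ in the $x'$-block --- but the choice of $\omega$ is exactly backwards, and this is a genuine gap rather than a bookkeeping issue. You pick $\Sigma$ orthogonal to $\zeta$; the paper instead takes $\omega$ \emph{parallel} to $\zeta$, i.e.\ $\omega=(1,a_2,\dots,a_{d-k+1},0,\dots,0)$ is the normal to the two parallel hyperplanes that trap the cap. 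The point is that what must be controlled in $\int_\Sigma|e_\lambda|^2=\sum_{m,n}c_m\bar c_n\int_\Sigma e^{i(m-n)\cdot x}d\sigma$ is the \emph{difference} $(m-n)\cdot\omega$, and for $m,n\in\mathcal C$ this is $O(1)$ precisely in the thin direction of the cap, namely along $\zeta$. In the directions orthogonal to $\zeta$ (which is what you chose), the cap has diameter $\approx\lambda^{1/2}$, so $(m-n)\cdot\omega$ can be of size $\lambda^{1/2}$ and the sinc factors do not stay bounded below; your own paragraph acknowledges this (``phases vary by $O(\lambda^{1/2}|x|)$'') and then contradicts it two sentences later.

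With the correct orientation the computation is direct and involves no ``balancing of powers'': since the frequencies have zero last $k-1$ components, $(m-n)\cdot e_j=0$ for $j\ge d-k+2$, and since the cap is trapped between hyperplanes of separation $<\tfrac1{d-k+1}$, one has $|(m-n)\cdot\omega|<1$, hence every sinc factor is $\approx 1$ and $\int_\Sigma|e_\lambda|^2\approx N_{d-k,\lambda}^{-1}\sum_{m,n\in\mathcal C}1\approx N_{d-k,\lambda}$. Both inequalities in \eqref{Tdsharp} come out of this single computation; there is no separate upper-bound argument, and in particular you do not need (and will not get) $A_{k,d,\lambda}\lesssim N_{d-k,\lambda}$ from Theorem~\ref{prop3a}.
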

This result generalizes Theorem \ref{prop1} to higher dimensions and is related to the conjecture by Bourgain-Rudnick. Indeed, for some hyperplane $\Sigma\subset\mathbb{T}^d$ and an eigenfunction $e_\lambda$, we have \[\|e_\lambda\|_{L^2(\Sigma)}\approx \sqrt{N_{1,\lambda}}\|e_\lambda\|_{L^2({\mathbb{T}^d})},\] which means that the conjecture \eqref{BR01} in any dimension ($d\ge2$) is not true if  $N_{1,\lambda}$ is unbounded.

We may generalize Theorem \ref{prop2} and prove the correct bounds in higher dimensions.
\begin{theorem}\label{prop7} Let $d\ge2$, and $\Sigma\subset \mathbb{T}^d$. If $\Sigma$ is a totally geodesic submanifold of dimension $k$ and it is determined by linear equations with rational coefficients, then for any $\eps>0$
\begin{equation}\label{Tdkrational}
  \|e_\lambda\|_{L^2(\Sigma)}\ls \lambda^{\frac{d-k-2}2+\eps}\|e_\lambda\|_{L^2({\mathbb{T}^d})},\ {\rm when} \ 1\le k\le d-2,
\end{equation}
and
  \begin{equation}\label{Tdk1ratonal}
    \|e_\lambda\|_{L^2(\Sigma)}\ls \|e_\lambda\|_{L^2({\mathbb{T}^d})}, \ {\rm when} \ k=d-1,
  \end{equation}
  where the constant may depend on the rational coefficients but is independent of $\lambda$.
\end{theorem}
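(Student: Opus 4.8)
The plan is to argue directly rather than through the band count of Theorem \ref{prop3a}, since the latter carries a $(\log\lambda)^{k/2}$ loss and so cannot yield the loss-free estimate \eqref{Tdk1ratonal}. The point is that a rational totally geodesic $\Sigma$ lies inside a rational subtorus $\widetilde\Sigma$ on which Fourier analysis is exact. Write $e_\lambda(x)=\sum_{n\in E}a_n e^{in\cdot x}$ with $E=\mathbb{Z}^d\cap\lambda S^{d-1}$, so $\|e_\lambda\|_{L^2(\mathbb{T}^d)}^2\approx\sum_{n\in E}|a_n|^2$. Let $W$ be the (rational, $k$-dimensional) direction space of $\Sigma$, choose a $\mathbb{Z}$-basis $w_1,\dots,w_k$ of $W\cap\mathbb{Z}^d$, and parametrize $\widetilde\Sigma$ by $\phi(s)=x_0+\sum_j s_jw_j$, $s\in(\mathbb{R}/2\pi\mathbb{Z})^k$. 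Since each $n\cdot w_j\in\mathbb{Z}$, the pull-back $e_\lambda\circ\phi$ is a genuine trigonometric polynomial on $(\mathbb{R}/2\pi\mathbb{Z})^k$ with frequencies $\pi_W(n):=(n\cdot w_1,\dots,n\cdot w_k)\in\mathbb{Z}^k$. Since $\widetilde\Sigma$ has Hausdorff measure $(2\pi)^k\sqrt{\det(w_i\cdot w_j)}\ge(2\pi)^k\ge1$ ($\det(w_i\cdot w_j)$ being a positive integer by Cauchy-Binet), the unit-measure piece $\Sigma$ is contained in $\widetilde\Sigma$, so Parseval and then Cauchy-Schwarz give
\[
\|e_\lambda\|_{L^2(\Sigma)}^2\le\|e_\lambda\|_{L^2(\widetilde\Sigma)}^2\ls\sum_{m\in\mathbb{Z}^k}\Big|\sum_{n\in E_m}a_n e^{in\cdot x_0}\Big|^2\ls\big(\max_m\#E_m\big)\sum_{n\in E}|a_n|^2,
\]
where $E_m:=\{n\in E:\pi_W(n)=m\}$ and the implicit constants depend only on the rational data of $\Sigma$. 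So it remains to bound $\max_m\#E_m$, which is precisely the rational-coefficient version of $A_{k,d,\lambda}$: each $E_m$ is the set of lattice points of $\lambda S^{d-1}$ lying on a fixed rational affine subspace of codimension $k$.

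Next I identify $\#E_m$ with a quadratic-form representation number. Fix $n_m\in E_m$; then every $n\in E_m$ equals $n_m+v$ with $v\in W^\perp\cap\mathbb{Z}^d$, and writing $n_m=a+b$ with $a\in W$, $b\in W^\perp$, the sphere condition $|n_m+v|^2=\lambda^2$ becomes $|v+b|^2=\lambda^2-|a|^2$. Choosing an integer basis $f_1,\dots,f_{d-k}$ of $W^\perp\cap\mathbb{Z}^d$ and writing $v=\sum y_if_i$, this reads $Q(y+t)=\rho^2$ with $y\in\mathbb{Z}^{d-k}$, where $Q(z)=\sum_{i,j}(f_i\cdot f_j)z_iz_j$ is a fixed positive definite integral quadratic form in $d-k$ variables, while $t\in\mathbb{Q}^{d-k}$ and $\rho^2=\lambda^2-|a|^2\in\mathbb{Q}$ have denominators bounded in terms of $\Sigma$ alone. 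Clearing those denominators by a fixed integer $D$ turns the count into a count of representations of a positive integer $N\ls\lambda^2$ by $Q$, whence $\#E_m\le r_Q(N)$ with $N\ls\lambda^2$.

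The theorem then follows from the classical bounds for the representation number $r_Q(N)$ of a fixed positive definite integral quadratic form in $m=d-k$ variables: $r_Q(N)\le2$ when $m=1$; $r_Q(N)\ls N^\eps$ when $m=2$ (a divisor-type bound via the associated imaginary quadratic order); $r_Q(N)\ls N^{1/2+\eps}$ when $m=3$; and $r_Q(N)\ls N^{m/2-1+\eps}$ when $m\ge4$ (the classical quaternary bound for $m=4$, and Kloosterman's refinement of the circle method for $m\ge5$). With $N\ls\lambda^2$ this yields $\#E_m\ls\lambda^{d-k-2+\eps}$ for $d-k\ge2$ and $\#E_m\le2$ for $d-k=1$; together with the reduction above this is exactly \eqref{Tdkrational} and \eqref{Tdk1ratonal}. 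The heart of the matter, and the only place where number theory beyond bookkeeping enters, is this last step: the $\lambda^\eps$-type representation bounds when $d-k\le2$ and the sharp power bounds when $d-k\ge3$. If one preferred to keep $\Sigma$ a short piece instead of passing to $\widetilde\Sigma$, one could run the large sieve, or the $\ell^2$-boundedness of the discrete Hilbert transform as in the proof of Theorem \ref{prop2}; but since the constants may depend on $\Sigma$ this refinement is unnecessary here.
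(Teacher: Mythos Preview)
Your argument is correct and takes a genuinely different route from the paper's. The paper keeps $\Sigma$ as a bounded parallelepiped and works analytically on it: for \eqref{Tdkrational} it runs the band decomposition \eqref{Tdkpf0} from the proof of Theorem~\ref{prop3a}, observes that rational $u_1,\dots,u_k$ force the lattice points in each band intersection onto finitely many affine $(d-k)$-planes, and then invokes Lemma~\ref{sphere} (lattice points on an embedded $(d-k-1)$-sphere, proved by reduction to circles and the binary-form divisor bound); for \eqref{Tdk1ratonal} it instead generalizes the discrete Hilbert transform argument of Theorem~\ref{prop2} to kill the log loss. You sidestep both devices at once by passing to the closed rational subtorus $\widetilde\Sigma$, where Parseval is exact, reducing everything to the single count $\max_m\#E_m$, which you then bound via representation numbers of the Gram form of $W^\perp\cap\mathbb{Z}^d$. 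This is more unified (one argument for all $k$) and replaces the geometric Lemmas~\ref{center}--\ref{sphere} by a direct appeal to $r_Q(N)\ls N^{\max(m/2-1,0)+\eps}$; the underlying number theory is the same, since the slicing proof of that bound is precisely the reduction-to-binary-forms in Lemma~\ref{sphere}.

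One small point: your inequality $\|e_\lambda\|_{L^2(\Sigma)}^2\le\|e_\lambda\|_{L^2(\widetilde\Sigma)}^2$ is literally true only when the unit-measure parallelepiped $\Sigma$ embeds in $\widetilde\Sigma$, which need not hold for elongated shapes. What is true (and sufficient here) is that the parameter domain of $\Sigma$, rewritten in the $w_j$-coordinates, is covered by boundedly many translates of $[0,2\pi]^k$, with the bound depending only on the rational data; so $\|e_\lambda\|_{L^2(\Sigma)}^2\ls_\Sigma\|e_\lambda\|_{L^2(\widetilde\Sigma)}^2$, which is all you need since the theorem allows constants depending on the rational coefficients.
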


Clearly, these results improve the bounds \eqref{BGT01}, \eqref{BGT02} and \eqref{BGT03}. These bounds agree with the correct bounds given by  \eqref{Ak1}, \eqref{Ak2} and Theorem \ref{prop3a}. In particular, when $k=d-1$, the uniform bound \eqref{Tdk1ratonal} agrees with the conjecture bound \eqref{BR01}. Furthermore, it is natural to make the following conjecture according to Theorem \ref{prop7}.
\begin{conjecture}
  Let $d\ge3$ be arbitrary and $\Sigma\subset \mathbb{T}^d$ a smooth submanifold of dimension $k$ ($1\le k\le d-2$). Then all eigenfunctions $e_\lambda$ of $\mathbb{T}^d$ satisfy
  \begin{equation}
    \|e_\lambda\|_{L^2(\Sigma)}\ls\lambda^{\frac{d-k-2}2+\eps}\|e_\lambda\|_{L^2(\mathbb{T}^d)},\ \forall\eps>0.
  \end{equation}
\end{conjecture}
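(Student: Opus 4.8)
The plan is to \emph{linearise} the restriction problem for a fixed rational totally geodesic $\Sigma$ into a question about representation numbers of a single positive-definite integral quadratic form, which — unlike the situation for irrational or curved submanifolds — is governed by classical analytic number theory. \textbf{Set-up.} Since $-\Delta e^{i\xi\cdot x}=|\xi|^2e^{i\xi\cdot x}$ on $\mathbb{T}^d$, write $e_\lambda=\sum_{\xi\in E}a_\xi e^{i\xi\cdot x}$ with $E=\mathbb{Z}^d\cap\lambda S^{d-1}$, so $\|e_\lambda\|_{L^2(\mathbb{T}^d)}^2\approx\sum_{\xi\in E}|a_\xi|^2$. As $\mathbb{T}^d$ is flat, $\Sigma$ is a bounded piece of the image of an affine plane $x_0+W$ with $\dim W=k$, and the hypothesis that $\Sigma$ is cut out by rational linear equations makes $W$, hence $W^\perp$, a rational subspace. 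Fix a $\mathbb{Z}$-basis $v_1,\dots,v_k$ of $\mathbb{Z}^d\cap W$, let $V$ be the $d\times k$ matrix with these columns, and parametrise $\Sigma$ by $t\mapsto x_0+Vt$ with $t$ in a bounded $\Omega\subset\mathbb{R}^k$ (the normalisation by the fixed Gram matrix $V^\top V$ making $\Sigma$ of unit Hausdorff measure). Restricting produces the trigonometric polynomial
\[
e_\lambda(x_0+Vt)=\sum_{m\in\mathbb{Z}^k}b_m\,e^{it\cdot m},\qquad b_m=\sum_{\xi\in E,\ V^\top\xi=m}a_\xi e^{i\xi\cdot x_0},
\]
with integer frequencies $m=V^\top\xi=(v_1\cdot\xi,\dots,v_k\cdot\xi)$.

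\textbf{Reduction to a lattice count.} Cover $\Omega$ by finitely many translates of a fundamental domain for $\mathbb{Z}^k$ (their number depending only on $\Sigma$) and apply Plancherel on $\mathbb{T}^k$ on each to get $\|e_\lambda\|_{L^2(\Sigma)}^2\ls\sum_m|b_m|^2$. Cauchy--Schwarz on each fibre gives $|b_m|^2\le\#\{\xi\in E:V^\top\xi=m\}\cdot\sum_{\xi\in E,\ V^\top\xi=m}|a_\xi|^2$, and summing over $m$,
\[
\|e_\lambda\|_{L^2(\Sigma)}^2\ls P_\lambda\sum_{\xi\in E}|a_\xi|^2\approx P_\lambda\,\|e_\lambda\|_{L^2(\mathbb{T}^d)}^2,\qquad P_\lambda:=\max_{m\in\mathbb{Z}^k}\#\{\xi\in\mathbb{Z}^d:V^\top\xi=m,\ |\xi|^2=\lambda^2\}.
\]
So it suffices to prove $P_\lambda\ls1$ when $k=d-1$ and $P_\lambda\ls\lambda^{d-k-2+\eps}$ when $1\le k\le d-2$, with constants depending only on $v_1,\dots,v_k$ and $\eps$.

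\textbf{Bounding $P_\lambda$.} Fix $m$ with nonempty fibre and pick $\xi_m\in\mathbb{Z}^d$ with $V^\top\xi_m=m$; every solution is $\xi=\xi_m+y$ with $y\in\Lambda^\perp:=\mathbb{Z}^d\cap W^\perp$, a lattice of rank $n:=d-k$. Splitting $\xi_m$ along $W\oplus W^\perp$ we get $|\xi|^2=|\xi_m^{W}|^2+|\xi_m^{\perp}+y|^2$, with $|\xi_m^{W}|^2=m^\top(V^\top V)^{-1}m$ a fixed rational positive-definite quadratic form in $m$; hence $|\xi|^2=\lambda^2$ iff $z:=\xi_m^{\perp}+y$ lies on the sphere of radius $r_m$ in $W^\perp$, where $r_m^2=\lambda^2-m^\top(V^\top V)^{-1}m\in[0,\lambda^2]$. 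If $k=d-1$ then $n=1$, $W^\perp$ is a line, a line meets a sphere in at most two points, so $P_\lambda\le2$ and \eqref{Tdk1ratonal} follows. If $1\le k\le d-2$ then $n\ge2$: choosing a $\mathbb{Z}$-basis $w_1,\dots,w_n$ of $\Lambda^\perp$ and writing $\xi_m^{\perp}=\sum_ie_iw_i$ with $e\in\mathbb{Q}^n$ of bounded denominator, the count equals $\#\{c\in\mathbb{Z}^n:Q(c+e)=r_m^2\}$ with $Q(s)=\sum_{i,j}(w_i\cdot w_j)s_is_j$ a fixed positive-definite \emph{integral} quadratic form; clearing the bounded denominators of $e$ and of $r_m^2=\lambda^2-m^\top(V^\top V)^{-1}m$ by a fixed integer $D$ and substituting $c'=D(c+e)$ reduces it to $r_Q(M):=\#\{c'\in\mathbb{Z}^n:Q(c')=M\}$ with $0\le M:=D^2r_m^2\in\mathbb{Z}$ and $M\ls\lambda^2$. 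Now the classical bounds $r_Q(M)\ls M^{\eps}$ for $n=2$ (divisor bound via ideal counting), $r_Q(M)\ls M^{1/2+\eps}$ for $n=3$, and $r_Q(M)\ls M^{\max(1,n/2-1)+\eps}$ for $n\ge4$ (Hardy--Littlewood circle method, with the Kloosterman refinement for $n=4$) give $P_\lambda\ls\lambda^{n-2+\eps}=\lambda^{d-k-2+\eps}$ in every case.

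\textbf{Main obstacle.} The expansion/restriction/Cauchy--Schwarz/Plancherel steps are soft; what makes the correct exponents attainable precisely for rational $\Sigma$ is the observation in the last step that the fibre is a translate of a \emph{fixed} rank-$(d-k)$ lattice lying on a sphere of radius $\le\lambda$, so the count is a representation number of a single fixed form. The genuinely delicate point is uniformity: one must verify that, as $m$ and $\lambda$ vary, the modulus $M$ stays $\ls\lambda^2$ and that every implied constant depends only on the rational data defining $\Sigma$ (through the $v_j$, the $w_i$, and the bounded denominators of $(V^\top V)^{-1}$ and of $\xi_m^{\perp}$), never on $\lambda$; the deep input — bounds on $r_Q(M)$ for $n\ge4$ — is then imported from standard analytic number theory.
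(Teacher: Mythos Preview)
The statement you were asked to address is Conjecture~2, which concerns an \emph{arbitrary smooth submanifold} $\Sigma\subset\mathbb{T}^d$ of dimension $k$. Your argument, however, begins by assuming that $\Sigma$ is a \emph{rational totally geodesic} submanifold: you write that ``the hypothesis that $\Sigma$ is cut out by rational linear equations makes $W$, hence $W^\perp$, a rational subspace.'' That hypothesis is not part of the conjecture. The conjecture is stated in the paper precisely as an open problem; the paper does not prove it, and neither does your argument. For a general smooth (even just irrational totally geodesic) $\Sigma$, every step of your reduction fails: the frequencies $V^\top\xi$ are no longer integers so Plancherel on $\mathbb{T}^k$ is unavailable, and $\mathbb{Z}^d\cap W^\perp$ need not be a full-rank lattice, so the fibres are not translates of a fixed lattice and there is no single quadratic form whose representation numbers control everything.

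What you have actually written is a proof of Theorem~\ref{prop7} (the rational totally geodesic case), and as such it is a correct and rather clean alternative to the paper's argument. The paper proves \eqref{Tdkrational} by going through the band decomposition from the proof of \eqref{Tdk} and then invoking Lemma~\ref{sphere} to count lattice points on the lower-dimensional spheres that arise as slices; this requires the auxiliary Lemmas~\ref{center}--\ref{plane} controlling heights. Your route bypasses the bands entirely: you exploit periodicity of the restricted eigenfunction to apply Plancherel on $\mathbb{T}^k$, collapse the problem to a single supremum $P_\lambda$, and identify each fibre directly with the representation number of one fixed positive-definite integral form in $d-k$ variables. Both approaches ultimately rest on the same number-theoretic input (the divisor bound in the binary case, then slicing to reduce higher rank to binary), but your packaging is more direct and makes the dependence of the constants on the rational data of $\Sigma$ more transparent. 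Incidentally, you do not need the circle method or Kloosterman: for any fixed positive-definite integral form $Q$ in $n\ge2$ variables, freezing $n-2$ coordinates and applying the binary divisor bound already gives $r_Q(M)\ls M^{(n-2)/2+\eps}$, which is exactly what you use.
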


\noindent\textbf{Acknowledgement. } The authors would like to thank Professor C. Sogge and Professor Z. Rudnick for their helpful suggestions and comments. The authors are grateful to the anonymous referee for very thorough and helpful reports.

\section{Proof of 2 dimensional case}

We prove Theorem \ref{prop1} and \ref{prop2} on $\mathbb{T}^2$.  Let $e_\lambda(x)=\sum_{n\in \mathcal{E}}c_ne^{in\cdot x}$, $\sum_{n\in\mathcal{E}}|c_n|^2=1$, where $\mathcal{E}=\mathbb{Z}^2\cap \lambda S^1$.
\subsection{Proof of Theorem \ref{prop1}}
~\\
\noindent \textbf{Proof of \eqref{T2}.}  Without loss of generality, let $\gamma=(x_1,ax_1),\ a\in\mathbb{R},\ |x_1|\le1$.  We may additionally assume that $|a|\le1$, otherwise we may exchange the $x_1,x_2$ coordinates. Then
\begin{align*}\int_\gamma |e_\lambda|^2d\sigma &=\sum_m\sum_nc_m\bar{c}_n\int_\gamma e^{i(m-n)\cdot x}d\sigma\\
&=\sum_m\sum_nc_m\bar{c}_n\int_{-1}^1e^{i(m_1-n_1+a(m_2-n_2))x_1}\sqrt{1+a^2}dx_1\\
&\ls\sum_m\sum_n|c_m||c_n|\frac{|\sin{(m_1-n_1+a(m_2-n_2))|}}{|m_1-n_1+a(m_2-n_2)|}\\
&\ls\sum_{m,n:|m_1-n_1+a(m_2-n_2)|\ls1}|c_m||c_n|+\sum_{k\ge0}^{\log_2\lambda}\sum_{m,n:|m_1-n_1+a(m_2-n_2)|\approx2^k}|c_m||c_n|2^{-k}\\
&:=I_1+I_2.\end{align*}

If $|m_1-n_1+a(m_2-n_2)|\ls1$, then for fixed $m\in\mathcal{E}$, all the possible $n\in\mathcal{E}$ lie on the two arcs between two parallel lines with distance $\approx 1$. See Figure \ref{fig11}.  Note that the length of each arc is $\ls\lambda^\frac12$.  Thus, we have at most $N_{1,\lambda}$ choices of $n$. Similarly, if $|m_1-n_1+a(m_2-n_2)|\approx 2^k$, then for fixed $m\in\mathcal{E}$, all the possible $n\in\mathcal{E}$ lie on the two arcs between two parallel lines with distance $\approx 2^k$. Note that the length of each arc is $\ls\lambda^\frac12 2^{k/2}$. Thus, we have at most $2^{k/2} N_{1,\lambda}$ choices of $n$. Therefore, by Cauchy-Schwarz
\[I_1\ls \sum_m |c_m|^2\cdot N_{1,\lambda}\approx N_{1,\lambda},\]
\[I_2\ls \sum_{k\ge0}^{\log_2\lambda}\sum_m|c_m|^22^{-k}\cdot2^{k/2} N_{1,\lambda}\approx N_{1,\lambda}.\]
This proves \eqref{T2}.\\

\begin{figure}
   \includegraphics[width=0.6\textwidth]{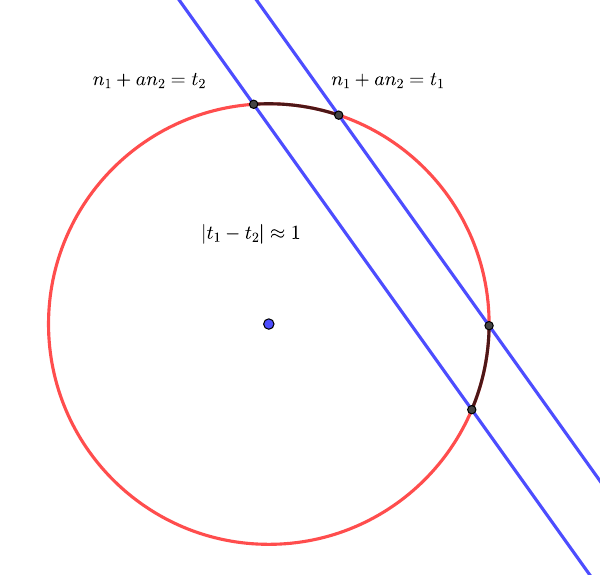}
   \caption{The arcs between the two parallel lines with distance $\approx 1$}
   \label{fig11}
   \end{figure}
  \begin{figure}
   \includegraphics[width=0.6\textwidth]{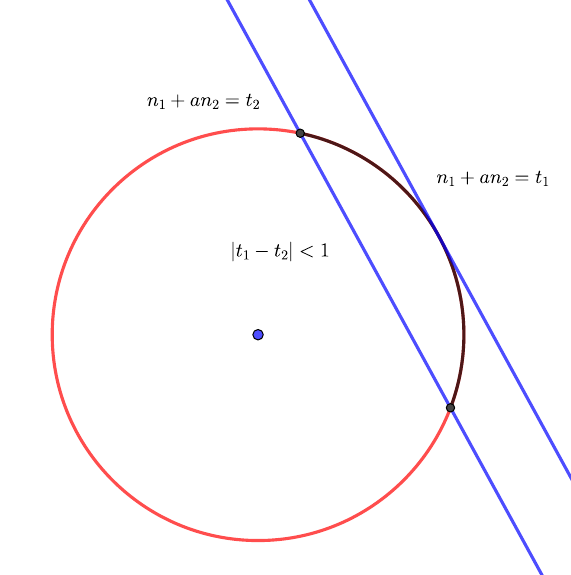}
   \caption{The $\lambda^\frac12$-arc with $\approx N_{1,\lambda}$ lattice points}
   \label{fig12}
   \end{figure}
\noindent\textbf{Proof of \eqref{T2sharp}.} We assume that there is some $\lambda^\frac12$-arc  $\mathcal{C}\subset\lambda S^1$ containing $\approx N_{1,\lambda}$ lattice points. See Figure \ref{fig12}. The arc must lie between two parallel lines: its chord and the tangent line at its midpoint. We may assume the distance between these two lines is $<\frac1{2}$. Without loss of generality, we may also assume the equations of these lines have the form: $x_1+ax_2=t$, $|a|\le1$. (Otherwise, exchange $x_1,x_2$ coordinates.) Let $c_n=\frac1{\sqrt{N_{1,\lambda}}}$ when $n$ is on the arc $\mathcal{C}$, and $c_n=0$, otherwise. Define the eigenfunction $e_\lambda(x)=\sum_{n\in\mathcal{E}}c_ne^{in\cdot x}$. Then $\sum_{n\in\mathcal{E}}|c_n|^2=1$, by definition. Since when $m,n$ are on the arc, $|m_1-n_1+a(m_2-n_2)|<\frac1{2}\sqrt{1+a^2}\le1$, so
\[0<\frac{\sin(m_1-n_1+a(m_2-n_2))}{m_1-n_1+a(m_2-n_2)}\approx 1.\] Let $\gamma=(x_1,ax_1),|x_1|\le1,|a|\le1$. Then
\begin{align*}\int_\gamma |e_\lambda|^2d\sigma &=\sum_{m\in\mathcal{E}}\sum_{n\in\mathcal{E}}c_m\bar{c}_n\int_\gamma e^{i(m-n)\cdot x}d\sigma\\
&\approx\frac1{N_{1,\lambda}}\sum_{m\in\mathcal{C}}\sum_{n\in\mathcal{C}}\frac{\sin{(m_1-n_1+a(m_2-n_2))}}{m_1-n_1+a(m_2-n_2)}\\
&\approx N_{1,\lambda}\end{align*}
Thus  \eqref{T2sharp} is proved.
\subsection{Proof of Theorem \ref{prop2}}

\begin{lemma}\label{hilbert}
  Let $|\mu|\le1$. If  $T_\mu: l^2(\mathbb{Z})\to l^2(\mathbb{Z})$
  \[(a_t)\mapsto (b_s),\ b_s=\sum_{t\in\mathbb{Z}}\frac{\sin(\mu(t-s))}{t-s}a_t,\]
  then $\sup_{|\mu|\le1}\|T_\mu\|_{l^2(\mathbb{Z})\to l^2(\mathbb{Z})}\ls 1$.
\end{lemma}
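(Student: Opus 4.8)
The plan is to recognize $T_\mu$ as a discrete convolution operator on $\mathbb{Z}$ and to bound its Fourier multiplier. Setting $j=t-s$ and using that $j\mapsto\tfrac{\sin(\mu j)}{j}$ is even in $j$, we have $(T_\mu a)_s=\sum_{j\in\mathbb{Z}}k_\mu(j)\,a_{s-j}$, where $k_\mu(j)=\tfrac{\sin(\mu j)}{j}$ for $j\ne0$ and $k_\mu(0)=\mu$ is the limiting value; the $j=0$ term simply contributes the operator $\mu\,\mathrm{Id}$, of norm $|\mu|\le1$, so it is harmless. Since $k_\mu\in l^2(\mathbb{Z})$, under the Fourier transform $\mathcal{F}\colon l^2(\mathbb{Z})\to L^2([-\pi,\pi])$, $(a_t)\mapsto\sum_t a_t e^{it\theta}$, the operator $T_\mu$ is conjugated---first on finitely supported sequences, where every sum converges absolutely---to multiplication by the symbol $m_\mu(\theta)=\sum_{j\in\mathbb{Z}}k_\mu(j)e^{ij\theta}$. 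Hence $\|T_\mu\|_{l^2\to l^2}=\|m_\mu\|_{L^\infty([-\pi,\pi])}$, and it suffices to show that $\|m_\mu\|_{L^\infty([-\pi,\pi])}\le\pi$ uniformly in $|\mu|\le1$.

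To identify $m_\mu$, I would observe that $k_\mu$ is precisely the sequence of Fourier coefficients of the function $\theta\mapsto\pi\,\mathbf{1}_{\{|\theta|\le\mu\}}$ on $[-\pi,\pi]$: indeed $\tfrac1{2\pi}\int_{-\pi}^{\pi}\pi\,\mathbf{1}_{\{|\theta|\le\mu\}}\,e^{-ij\theta}\,d\theta=\tfrac12\int_{-\mu}^{\mu}e^{-ij\theta}\,d\theta=\tfrac{\sin(\mu j)}{j}=k_\mu(j)$ (with value $\mu$ at $j=0$). Here it is essential that $|\mu|\le1<\pi$, so that $\{|\theta|\le\mu\}$ lies inside a single period and does not wrap around $[-\pi,\pi]$; this is exactly the role of the hypothesis $|\mu|\le1$. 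It follows that $m_\mu=\pi\,\mathbf{1}_{\{|\theta|\le\mu\}}$ almost everywhere, so $\|m_\mu\|_{L^\infty}=\pi$ and the lemma holds with an absolute constant. Equivalently, in the language of the remark that this is ``the $l^2$-boundedness of the discrete Hilbert transform'' $H$, $(Ha)_s=\sum_{t\ne s}\tfrac{a_t}{t-s}$: one writes $\tfrac{\sin(\mu j)}{j}=\tfrac1{2i}\big(\tfrac{e^{i\mu j}}{j}-\tfrac{e^{-i\mu j}}{j}\big)$ for $j\ne0$ and checks that $T_\mu=\tfrac1{2i}\bigl(M_{-\mu}HM_{\mu}-M_{\mu}HM_{-\mu}\bigr)+\mu\,\mathrm{Id}$, where $M_\nu$ is the unitary modulation $(M_\nu a)_t=e^{i\nu t}a_t$; the uniform bound then drops out of $\|M_{\pm\mu}\|_{l^2\to l^2}=1$ together with the $\mu$-independence of $\|H\|_{l^2\to l^2}$.

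I do not anticipate a serious obstacle here. The only real input is the elementary Fourier-coefficient identity above---equivalently, the classical $L^2$-boundedness of the conjugate function, i.e., of the discrete Hilbert transform---together with the observation $|\mu|\le1<\pi$ that rules out wrap-around on the circle. What remains is bookkeeping: because the series defining $b_s$ is in general only conditionally convergent (the kernel $k_\mu$ belongs to $l^2(\mathbb{Z})$ but not to $l^1(\mathbb{Z})$), one should define $T_\mu$ first on finitely supported sequences, verify there that it coincides with $\mathcal{F}^{-1}(m_\mu\,\mathcal{F}(\cdot))$, and then extend it to all of $l^2(\mathbb{Z})$ by density using the bound $\|m_\mu\|_{L^\infty}\le\pi$ just established.
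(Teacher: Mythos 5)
Your proof is correct and, at bottom, the same as the paper's: the paper simply cites the $l^2$-boundedness of the discrete Hilbert transform (Riesz), and your modulation identity $T_\mu=\tfrac1{2i}(M_{-\mu}HM_{\mu}-M_{\mu}HM_{-\mu})+\mu\,\mathrm{Id}$ makes that citation precise. Your first argument --- computing the Fourier multiplier $m_\mu=\pi\,\mathbf{1}_{\{|\theta|\le\mu\}}$ directly and reading off $\|T_\mu\|=\|m_\mu\|_{L^\infty}=\pi$ --- is a nice self-contained alternative that needs no external input; one small simplification is that the series for $b_s$ is in fact absolutely convergent for $a\in l^2$ by Cauchy--Schwarz (since $k_\mu\in l^2$), so the density/bookkeeping step at the end is unnecessary.
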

\begin{proof}
  It is a consequence of the $l^2$ boundedness of the discrete Hilbert transform, see e.g. \cite{R1927}.
\end{proof}

\noindent \textbf{Proof of \eqref{T2rational}.} Now we prove the uniform  bound for the $L^2$ restriction to $\gamma=(x_1,ax_1)$, $a=p/q,\ gcd(p,q)=1$, $|x_1|\le1$, $|a|\le1$.
Let $e_\lambda(x)=\sum_{n\in \mathcal{E}}c_ne^{in\cdot x}$, where $\mathcal{E}=\mathbb{Z}^2\cap \lambda S^1$, $\sum_{n\in\mathcal{E}}|c_n|^2=1$. Then
\begin{align*}\int_\gamma |e_\lambda|^2d\sigma &=\sum_m\sum_nc_m\bar{c}_n\int_\gamma e^{i(m-n)\cdot x}d\sigma\\
&\ls\Big|\sum_m\sum_nc_m\bar{c}_n\frac{\sin{((m_1+am_2)-(n_1+an_2))}}{(m_1+am_2)-(n_1+an_2)}\Big|.\\
\end{align*}

The straight line $x_1+ax_2=t$ intersects the circle $x_1^2+x_2^2=\lambda^2$ at at most two points, and they are separated by the perpendicular line $ax_1-x_2=0$ passing through their midpoint. Let
 \[L^+=\{(x_1,x_2)\in\mathcal{E}:ax_1-x_2\ge 0\},\]
 \[L^-=\{(x_1,x_2)\in\mathcal{E}:ax_1-x_2<0\},\]
 and then $\mathcal{E}=L^+\cup L^-$.
So we can split the sum above into 4 parts, and it suffices to estimate
\begin{align*}I_1&=\Big|\sum_{m\in L^+}\sum_{n\in L^+}c_m\bar{c}_n\frac{\sin{((m_1+am_2)-(n_1+an_2))}}{(m_1+am_2)-(n_1+an_2)}\Big|\\
&\ls \Big(\sum_{m\in L^+}\Big(\sum_{n\in L^+}\bar{c}_n\frac{\sin((m_1+am_2)-(n_1+an_2))}{(m_1+am_2)-(n_1+an_2)}\Big)^2\Big)^\frac12.\end{align*}
If $a=p/q\in\mathbb{Q}$, $gcd(p,q)=1$, $|p|\le |q|$, then for any fixed $t\in \mathbb{Z}$, there is at most one $n\in L^+$ such that $qn_1+pn_2=t$. So we may define $a_{t}:=\bar{c}_n$ if there is one $n\in L^+$ such that $qn_1+pn_2=t$, and  define $a_{t}:=0$, otherwise.
 Recall the operator $T_\mu$ in Lemma \ref{hilbert},
\[I_1\ls |q|\|T_{1/q}\|_{l^2\to l^2} (\sum_{t\in\mathbb{Z}}|a_{t}|^2)^\frac12\ls |q|.\]
Thus
\[\int_\gamma |e_\lambda|^2d\sigma\ls |q|.\]So the proof is finished.

\section{Proof of higher dimensional cases}
We prove Theorems \ref{prop3a}, \ref{prop3}, \ref{prop7} on $\mathbb{T}^d\ (d\ge2)$, generalizing the  two dimensional results to higher dimensions.  Let $e_\lambda(x)=\sum_{n\in \mathcal{E}}c_ne^{in\cdot x}$, $\sum_{n\in\mathcal{E}}|c_n|^2=1$, where $\mathcal{E}=\mathbb{Z}^d\cap \lambda S^{d-1}$, $d\ge2$.
\subsection{Proof of Theorem \ref{prop3a}}
~\\
\noindent\textbf{Proof of \eqref{Tdk}.} The following proof generalizes the method in the proof of \eqref{T2}. Let $\Sigma\subset \mathbb{T}^d\ (d\ge2)$ be a totally geodesic submanifold of dimension $k$. Note that $\Sigma$ is determined by $k$ linear equations, we can assume without loss of generality that \[\Sigma=\Big(x_1,...,x_k,\sum_{j=1}^ka_{k+1,j}x_j,...,\sum_{j=1}^ka_{d,j}x_j\Big)\]
where $|a_{ij}|\le 1$, $k+1\le i\le d,\ 1\le j\le k$.
Denote \[u_1=(1,0,...,0,a_{k+1,1},...,a_{d,1}),\]
\[u_2=(0,1,...,0,a_{k+1,2},...,a_{d,2}),\]
...\[u_k=(0,0,...,1,a_{k+1,k},...,a_{d,k}).\]
Then we can write $\Sigma=x_1u_1+...+x_ku_k$, and the measure
\[d\sigma=|u_1\wedge\cdot\cdot\cdot\wedge u_k|dx_1\cdot\cdot\cdot dx_k=\sqrt{det[u_i\cdot u_j]_{k\times k}}\ dx_1\cdot\cdot\cdot dx_k\]
and the condition $|a_{ij}|\le1$ implies that for some constant $C_{k,d}\ge1$ independent of $\Sigma$
\[1\le |u_1\wedge\cdot\cdot\cdot\wedge u_k|\le C_{k,d},\]
\[|u_j|\le (d-k+1)^{1/2},\ j=1,2,...,k.\]
Then
\begin{align*}
  \int_\Sigma|e_\lambda|^2d\sigma&=\sum_m\sum_nc_m\bar{c}_n\int_\Sigma e^{(m-n)\cdot x}d\sigma\\
  &\ls\sum_m\sum_n|c_m||c_n|\Big|\int_{[-1,1]^k}\prod_{j=1}^ke^{ix_j(m-n)\cdot u_j}dx_1\cdot\cdot\cdot dx_k\Big|\\
  &=\sum_m\sum_n|c_m||c_n|\Big|\prod_{j=1}^k\frac{\sin((m-n)\cdot u_j)}{(m-n)\cdot u_j}\Big|.\\
\end{align*}
Define for  $j=1,...,k$ and $p_j=0,1,2,...$
\begin{equation}\label{wideband}B_{p_j}(u_j,m)=\begin{cases}\{n\in\mathcal{E}:|(m-n)\cdot u_j|\ls1\}\quad {\rm if}\  p_j=0\\
\{n\in\mathcal{E}:|(m-n)\cdot u_j|\approx 2^{p_j}\}\quad {\rm if}\ p_j>0.\end{cases}\end{equation}
Therefore by Cauchy-Schwarz
\begin{align}\label{Tdkpf0}
  \int_\Sigma|e_\lambda|^2d\sigma&\ls\sum_{p_1,...,p_k=0}^{\log\lambda}\sum_{m\in\mathcal{E}}\sum_{n\in\cap_{j=1}^kB_{p_j}(u_j,m)}|c_m|^22^{-p_1}\cdot\cdot\cdot 2^{-p_k}\\
  &\label{Tdkpf}\ls \sum_{p_1,...,p_k=0}^{\log\lambda}\sum_{m\in\mathcal{E}}|c_m|^2\cdot A_{k,d,\lambda}\\
  &\ls A_{k,d,\lambda}(\log\lambda)^k.\end{align}
Here in the second inequality we split $B_{p_j}(u_j,m)$ into $\approx2^{p_j}$ unit bands $B(u_j,\cdot)$ and use the fact that the unit bands $B(u_1,\cdot),...,B(u_k,\cdot)$ are $\nu_{k,d}$-transverse
\[\frac{|u_1\wedge\cdot\cdot\cdot\wedge u_k|}{|u_1|\cdot\cdot\cdot|u_k|}\ge \frac1{(d-k+1)^{k/2}}:=\nu_{k,d}>0.\] Recall that $A_{k,d,\lambda}$ is the maximal number of lattice points in the intersection of $k$ unit bands that are $\nu_{k,d}$-transverse. Then the number of lattice points
\[\max_{m\in\mathcal{E}}\#\bigcap_{j=1}^kB_{p_j}(u_j,m)\ls A_{k,d,\lambda}2^{p_1+\cdot\cdot\cdot p_k}.\]
 This estimate gives \eqref{Tdkpf}. So the proof is complete.\\

\noindent\textbf{Proof of  \eqref{Tdksharp}.} The construction here generalizes the two dimensional case \eqref{T2sharp}. Fix an eigenvalue $\lambda>1$. Let $\mathcal{C}$ be the intersection of $k$ unit bands that are $\nu_{k,d}$-transverse,  which contains $A_{k,d,\lambda}$ lattice points in it. Without loss of generality, we may assume these bands are $B(u_1,x_{01})$,..., $B(u_k,x_{0k})$ where
 \[u_1=(1,0,...,0,a_{k+1,1},...,a_{d,1}),\]
\[u_2=(0,1,...,0,a_{k+1,2},...,a_{d,2}),\]
...\[u_k=(0,0,...,1,a_{k+1,k},...,a_{d,k}),\]
and $|a_{ij}|\le 1$, $k+1\le i\le d,\ 1\le j\le k$.

Let \[c_n=\begin{cases}\frac1{\sqrt{A_{k,d,\lambda}}},\ n\in \mathcal{C}\\
0,\ \ \ \ \ \ \ \ \ n\in\mathcal{E}\setminus\mathcal{C}.\end{cases}\]
Define the eigenfunction $e_{\lambda}(x)=\sum_{n\in\mathcal{E}}c_ne^{in\cdot x}$.
Then $\sum_{n\in\mathcal{E}}|c_n|^2=1$, by definition. If the distance between the two hyperplanes of each unit band is small enough (only depending on $k$ and $d$), then for any $m,n\in \mathcal{C}$ we have $|(m-n)\cdot u_j|\le1$, $j=1,...,k$, which implies
\[0<\frac{\sin((m-n)\cdot u_j)}{(m-n)\cdot u_j}\approx 1.\]
Let $\Sigma=x_1u_1+\cdot\cdot\cdot+x_ku_k$, $|x_j|\le1$, $j=1,...,k$. Then
\begin{align*}
  \int_{\Sigma}|e_\lambda|^2d\sigma&=\sum_{m\in\mathcal{E}}\sum_{n\in\mathcal{E}}c_m\bar{c}_n\int_\Sigma e^{i(m-n)\cdot x}d\sigma\\
  &\approx \frac1{A_{k,d,\lambda}}\sum_{m\in\mathcal{C}}\sum_{n\in\mathcal{C}}\prod_{j=1}^k\frac{\sin((m-n)\cdot u_j)}{(m-n)\cdot u_j}\\
  &\approx \frac1{A_{k,d,\lambda}}\sum_{m\in\mathcal{C}}\sum_{n\in\mathcal{C}} 1\\
  &\approx A_{k,d,\lambda}.
\end{align*}
Hence the eigenfunction $e_\lambda$ and the submanifold $\Sigma$ satisfy \eqref{Tdksharp}.

\subsection{Proof of Theorem \ref{prop3}}

The  construction here is similar to the proof of \eqref{Tdksharp}. Given fixed $d\ge2$, $1\le k\le d-1$, and an eigenvalue $\lambda>1$, we need to construct a totally geodesic submanifold $\Sigma$ of dimension $k$ and an eigenfunction $e_\lambda(x)$ satisfying \eqref{Tdsharp}. Consider the intersection of the $(d-k+1)$-plane $\Pi:\ x_{d-k+2}=...=x_d=0$ and the sphere $\lambda S^{d-1}$.  It is exactly the sphere $\lambda S^{d-k}$. Choose a $\lambda^\frac12$-cap $\mathcal{C}\subset\lambda S^{d-k}$, which contains the maximal number of lattice points $N_{d-k,\lambda}$. Then this cap must lie between two parallel $(d-k)$-planes $\pi_1,\pi_2$ in $\Pi$. Without loss of generality, we may assume the equations of these $(d-k)$-planes have the form: \[x_1+a_2x_2+...+a_{d-k+1}x_{d-k+1}=t,\ x_{d-k+2}=...=x_d=0,\] and $|a_j|\le1$, $j=2,...,d-k+1$.  We may also assume their distance is $<\frac1{d-k+1}$. Then we can construct \[\Sigma=(x_1,a_2x_1,...,a_{d-k+1}x_1,x_2,...,x_{k}),\ |x_j|\le 1,\ j=1,...,k.\]Then for any lattice points $m,n$ in this cap $\mathcal{C}$, we have
\[|m_1-n_1+...+a_{d-k+1}(m_{d-k+1}-n_{d-k+1})|<\frac1{d-k+1}\sqrt{1+a_2^2+...+a_{d-k+1}^2}\le1.\]
So we have \[0<\frac{\sin(m_1-n_1+a_2(m_2-n_2)+...+a_{d-k+1}(m_{d-k+1}-n_{d-k+1}))}{m_1-n_1+a_2(m_2-n_2)+...+a_{d-k+1}(m_{d-k+1}-n_{d-k+1})}\approx 1.\] Let $c_n=\frac1{\sqrt{N_{d-k,\lambda}}}$ when $n\in\mathcal{C}$, and $c_n=0$, otherwise. Define the eigenfunction $e_\lambda(x)=\sum_{n\in\mathcal{E}}c_ne^{in\cdot x}$. Then $\sum_{n\in\mathcal{E}}|c_n|^2=1$, by definition. Then
\begin{align*}\int_\gamma |e_\lambda|^2d\sigma &=\sum_{m\in\mathcal{E}}\sum_{n\in\mathcal{E}}c_m\bar{c}_n\int_\Sigma e^{i(m-n)\cdot x}d\sigma\\
&\approx\frac1{N_{d-k,\lambda}}\sum_{m\in\mathcal{C}}\sum_{n\in\mathcal{C}}\frac{\sin{(m_1-n_1+...+a_{d-k+1}(m_{d-k+1}-n_{d-k+1}))}}{m_1-n_1+...+a_{d-k+1}(m_{d-k+1}-n_{d-k+1})}\\
&\approx N_{d-k,\lambda}\end{align*}
Thus  \eqref{Tdsharp} is proved.

\subsection{Proof of Theorem \ref{prop7}}
Let $\frac{p}{q}\in\mathbb{Q}$, $p,q\in\mathbb{Z}$, $q\ne0$. The \textbf{height} of the rational number $\frac{p}{q}$ with $gcd(p,q)=1$ is defined by
\[H(p/q)=\max\{|p|,|q|\}.\]
Clearly, $H(0)=1$ and $H(n)=|n|$ if $n\in\mathbb{Z}\setminus\{0\}$. If $gcd(p,q)\ge1$, then \[H(p/q)=\frac{\max\{|p|,|q|\}}{gcd(p,q)}\le \max\{|p|,|q|\}.\] Let $x,y\in \mathbb{Q}$. Then by definition, we have the following elementary arithmetic properties
\[H(-x)=H(x)\]
\[H(x^{-1})=H(x),\ x\ne0\]
\[H(x+y)\le 2H(x)H(y)\]
\[H(xy)\le H(x)H(y).\]
Now we prove the following basic lemmas by using the arithmetic properties of the height function.

\begin{lemma}\label{center}
  Let $d\ge2$. Let $A_i=(a_{i1},...,a_{id})\in\mathbb{Z}^d$, $i=1,2,3$. Suppose that they are non-collinear, and each component $|a_{ij}|\ls R$, $i=1,2,3$ and $1\le j\le d$.
  If $X_0=(x_{01},...,x_{0d})$ is the center of the circle determined by these three points, then $X_0\in\mathbb{Q}^d$ and $H(x_{0j})\ls R^{c}$, $1\le j\le d$, for some constant $c>0$.\end{lemma}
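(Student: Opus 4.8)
The plan is to locate the circumcenter $X_0$ of three non-collinear lattice points $A_1,A_2,A_3$ as the solution of an explicit linear system, and then track the height of the rational entries through that system. First I would note that $X_0$ lies in the affine plane spanned by $A_1,A_2,A_3$ and is characterized by the two equations $|X_0-A_1|^2=|X_0-A_2|^2$ and $|X_0-A_1|^2=|X_0-A_3|^2$, each of which, after expanding and cancelling the $|X_0|^2$ term, becomes \emph{linear} in $X_0$: explicitly $2(A_2-A_1)\cdot X_0=|A_2|^2-|A_1|^2$ and $2(A_3-A_1)\cdot X_0=|A_3|^2-|A_1|^2$. Together with the condition that $X_0-A_1$ be orthogonal to $(A_2-A_1)\wedge(A_3-A_1)$ — equivalently, $X_0$ lies in the plane through $A_1,A_2,A_3$ — this gives a $d\times d$ linear system $MX_0=b$ with integer coefficients, where every entry of $M$ and $b$ is a polynomial of bounded degree in the $a_{ij}$, hence of size $\ls R^{O(1)}$; the non-collinearity of $A_1,A_2,A_3$ guarantees $\det M\neq 0$.

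Next I would solve by Cramer's rule: $x_{0j}=\det M_j/\det M$, where $M_j$ is $M$ with its $j$-th column replaced by $b$. Since $\det M$ and each $\det M_j$ are integers bounded in absolute value by $\ls R^{c}$ for some fixed $c$ (a determinant of a $d\times d$ matrix with entries $\ls R^{O(1)}$, expanded via the Leibniz formula, is a sum of $d!$ products each of size $\ls R^{O(1)}$, with $d$ fixed), we conclude $x_{0j}\in\mathbb{Q}$ and, using $H(p/q)\le\max\{|p|,|q|\}$ together with the multiplicative/additive height properties listed just above the lemma, $H(x_{0j})\le\max\{|\det M_j|,|\det M|\}\ls R^{c}$. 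This is the whole argument; it is essentially bookkeeping once the right linear system is written down.

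The only genuine subtlety — and the step I would be most careful about — is setting up the $d$-th equation (the "in-plane" constraint) so that it too has integer coefficients of controlled size and so that the resulting matrix $M$ is provably invertible exactly under the non-collinearity hypothesis. One clean way is: choose any two indices $j_1,j_2$ such that the corresponding $2\times 2$ minor of the $2\times d$ matrix with rows $A_2-A_1,\ A_3-A_1$ is nonzero (possible since these two vectors are independent), and use that to express $d-2$ of the coordinates of $X_0-A_1$ in terms of the other two via the wedge-product (cross-product-type) relations, all with integer coefficients of size $\ls R^{O(1)}$; then substitute into the two sphere-equations. Alternatively — and this is probably the cleanest for the writeup — parametrize $X_0=A_1+s(A_2-A_1)+t(A_3-A_1)$ and reduce to a $2\times2$ system for $(s,t)$ with integer coefficients of size $\ls R^{O(1)}$ and nonzero determinant (a Gram determinant, nonzero by non-collinearity), solve for $s,t\in\mathbb Q$ with $H(s),H(t)\ls R^{c}$ by Cramer, and then $x_{0j}=a_{1j}+s(a_{2j}-a_{1j})+t(a_{3j}-a_{1j})$ has $H(x_{0j})\ls R^{c}$ by the height inequalities. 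Beyond getting this reduction right, no real obstacle remains.
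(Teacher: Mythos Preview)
Your proposal is correct and, in its ``cleanest'' alternative form (parametrize $X_0=A_1+s(A_2-A_1)+t(A_3-A_1)$, reduce to a $2\times2$ system with Gram-matrix determinant, solve for $s,t\in\mathbb{Q}$ with controlled height, then read off $x_{0j}$), it is essentially the same argument as the paper's: the paper also reduces to two rational parameters locating $X_0$ in the affine $2$-plane, the only cosmetic difference being that it parametrizes the two perpendicular bisectors as lines $X(t),X(s)$ and equates them, rather than parametrizing the ambient plane and imposing the equidistance equations. Either way the content is ``finitely many arithmetic operations on integers of size $\ls R$,'' and the height bookkeeping you describe matches the paper's.
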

\begin{proof}
  Let $\textbf{a}=A_1-A_2$ and $\textbf{b}=A_1-A_3$. Note that the center of the circle is the intersection of two perpendicular bisectors of the segments $A_1A_2$ and $A_1A_3$. Then by direct calculations, the equations of these two perpendicular bisectors are
  \[X(t)=\tfrac{A_1+A_2}2+t(\textbf{b}-\tfrac{\textbf{a}\cdot \textbf{b}}{|\textbf{a}|^2}\textbf{a}),\ t\in\mathbb{R},\]
   \[X(s)=\tfrac{A_1+A_3}2+s(\textbf{a}-\tfrac{\textbf{a}\cdot \textbf{b}}{|\textbf{b}|^2}\textbf{b}),\ s\in\mathbb{R}.\]
  Thus we only need to solve $t,s$ from the $d$-dimensional linear system
 \[\tfrac{A_1+A_2}2+t(\textbf{b}-\tfrac{\textbf{a}\cdot \textbf{b}}{|\textbf{a}|^2}\textbf{a})=\tfrac{A_1+A_3}2+s(\textbf{a}-\tfrac{\textbf{a}\cdot \textbf{b}}{|\textbf{b}|^2}\textbf{b})\]
   which can be solved after finite steps of elementary arithmetic. So the solution $t,s\in \mathbb{Q}$. By the arithmetic properties of the height function, $H(t)$ and $H(s)$ are $\ls R^{c_0}$ for some constant $c_0>0$. Therefore, the center $X_0\in\mathbb{Q}^d$ and each $H(x_{0j})\ls R^c$ for some constant $c>0$.
\end{proof}
\begin{lemma}\label{plane}
  Let $d\ge2$. Let $A_i=(a_{i1},...,a_{id})\in\mathbb{Z}^d$, $i=1,2,3$. Suppose that they are non-collinear, and each component $|a_{ij}|\ls R$, $i=1,2,3$ and $1\le j\le d$. Then there are $i_1,i_2\in\{1,...,d\}$ such that the equation of the 2-plane passing through the three points has the form
  \[X=V_0+x_{i_1}V_1+x_{i_2}V_2\]
  where $X=(x_1,...,x_d)$, and $V_0,V_1,V_2\in\mathbb{Q}^d$ and each of their components has height $\ls R^c$ for some constant $c>0$.
\end{lemma}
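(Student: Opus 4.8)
The plan is to write the $2$-plane through $A_1,A_2,A_3$ in parametric form and then trade the two abstract parameters for two cleverly chosen ambient coordinates, the point being that non-collinearity plus integrality forces the relevant determinant to be a nonzero integer, hence bounded below by $1$, which is exactly what keeps all heights polynomial in $R$. Set $\mathbf{a}=A_2-A_1=(\alpha_1,\dots,\alpha_d)$ and $\mathbf{b}=A_3-A_1=(\beta_1,\dots,\beta_d)$; these are integer vectors with all components $\lesssim R$, and non-collinearity of $A_1,A_2,A_3$ means $\mathbf{a},\mathbf{b}$ are linearly independent. Hence the $d\times 2$ matrix with columns $\mathbf{a},\mathbf{b}$ has rank $2$, so there are indices $i_1,i_2\in\{1,\dots,d\}$ with
\[
D := \alpha_{i_1}\beta_{i_2}-\alpha_{i_2}\beta_{i_1}\neq 0 .
\]
Since $\mathbf{a},\mathbf{b}\in\mathbb{Z}^d$, $D$ is an integer, so $1\le |D|\lesssim R^2$.

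Next I would invert the restricted system. A point $X=(x_1,\dots,x_d)$ lies on the plane iff $X=A_1+s\mathbf{a}+t\mathbf{b}$ for some $s,t\in\mathbb{R}$; reading off only coordinates $i_1$ and $i_2$ gives
\[
\begin{pmatrix}\alpha_{i_1}&\beta_{i_1}\\ \alpha_{i_2}&\beta_{i_2}\end{pmatrix}\begin{pmatrix}s\\ t\end{pmatrix}=\begin{pmatrix}x_{i_1}-a_{1i_1}\\ x_{i_2}-a_{1i_2}\end{pmatrix},
\]
which by Cramer's rule has the unique solution $s=D^{-1}\big(\beta_{i_2}(x_{i_1}-a_{1i_1})-\beta_{i_1}(x_{i_2}-a_{1i_2})\big)$ and $t=D^{-1}\big(\alpha_{i_1}(x_{i_2}-a_{1i_2})-\alpha_{i_2}(x_{i_1}-a_{1i_1})\big)$. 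Substituting back into $X=A_1+s\mathbf{a}+t\mathbf{b}$ and collecting the coefficients of $x_{i_1}$ and of $x_{i_2}$ yields the asserted form $X=V_0+x_{i_1}V_1+x_{i_2}V_2$ with
\[
V_1=D^{-1}\big(\beta_{i_2}\mathbf{a}-\alpha_{i_2}\mathbf{b}\big),\qquad V_2=D^{-1}\big(\alpha_{i_1}\mathbf{b}-\beta_{i_1}\mathbf{a}\big),\qquad V_0=A_1-a_{1i_1}V_1-a_{1i_2}V_2 .
\]

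Finally I would read off the height bounds. Each component of $V_1$ and of $V_2$ is a ratio of an integer of size $\lesssim R^2$ (a difference of products of two entries, each $\lesssim R$) over the nonzero integer $D$ with $1\le|D|\lesssim R^2$, hence has height $\lesssim R^2$. Clearing the common denominator $D$, each component of $V_0$ equals an integer of size $\lesssim R^3$ divided by $D$, hence has height $\lesssim R^3$; alternatively one just invokes the recorded inequalities $H(xy)\le H(x)H(y)$ and $H(x+y)\le 2H(x)H(y)$. In every case the heights are $\lesssim R^c$ for an absolute constant $c$, proving the lemma. There is no real obstacle here; the only point that must not be glossed over is that the denominator $D$ coming out of Cramer's rule is a genuine \emph{integer} bounded below by $1$ — this is where non-collinearity and $A_i\in\mathbb{Z}^d$ are used, and it is precisely what prevents the heights from blowing up.
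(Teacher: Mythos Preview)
Your proof is correct and follows essentially the same approach as the paper's: parametrize the plane by $A_1+s\mathbf{a}+t\mathbf{b}$, pick two coordinates $i_1,i_2$ for which the corresponding $2\times2$ minor is nonzero, solve for $(s,t)$ in terms of $(x_{i_1},x_{i_2})$, and substitute back. The paper phrases the inversion step loosely as ``finite steps of elementary arithmetic'' together with the height-function inequalities, whereas you carry out Cramer's rule explicitly and obtain the concrete bounds $H\lesssim R^2$ for $V_1,V_2$ and $H\lesssim R^3$ for $V_0$; this is the same argument, just made more precise.
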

\begin{proof}
  Let $\textbf{a}=A_1-A_2$ and $\textbf{b}=A_1-A_3$. Then the equation of the 2-plane can be
  \[X(t,s)=A_1+t\textbf{a}+s\textbf{b},\ \ \ t,s\in \mathbb{R}.\]
  Consider the $d\times2$ matrix $M=[\textbf{a}\ \textbf{b}]$. Note that rank($M$)=2. So there are two linear independent rows, say, the $i_1$-th and $i_2$-th rows. Thus we can represent $(t,s)$ by $(x_{i_1},x_{i_2})$ after finite steps of elementary arithmetic, and then the equation of the plane changes to
   \[X=V_0+x_{i_1}V_1+x_{i_2}V_2\]
   where $V_0,V_1,V_2\in\mathbb{Q}^d$. By the arithmetic properties of the height function, each of their components has height $\ls R^c$ for some constant $c>0$.
\end{proof}

\begin{lemma}\label{sphere}
  Let $d\ge2$ and $1\le k\le d-1$. If  $R S^{k}(X_0)\subset \mathbb{R}^d$ is a $k$-sphere embedded in $\mathbb{R}^d$ with center $X_0\in\mathbb{R}^d$ and radius $R>1$, then
  \[\#\mathbb{Z}^d\cap R S^k(X_0)\ls R^{k-1+\eps},\ \forall\eps>0\]
  where the constant is independent of the embedding, $X_0$ and $R$.
\end{lemma}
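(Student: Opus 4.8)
The plan is to reduce the $k$-sphere $RS^k(X_0)$ in $\mathbb{R}^d$ to a $(k{+}1)$-dimensional ambient situation and then induct on $k$, using the classical bound $\#\mathbb{Z}^2\cap RS^1 \ls R^\eps$ (a divisor bound) as the base case $k=1$. First I would observe that if $\mathbb{Z}^d\cap RS^k(X_0)$ contains few points there is nothing to prove, so I may assume it contains at least $k{+}1$ affinely independent lattice points $A_0,\dots,A_k$; these span an affine $k$-plane $\Pi$, and $RS^k(X_0)$ is exactly the intersection of $\Pi$ with the $(k{+}1)$-sphere of radius $R$ centered at $X_0$ inside the $(k{+}1)$-plane orthogonal-complement... more precisely, $RS^k(X_0) = \Pi \cap \lambda S^{d-1}$-type sphere. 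The crucial point is that $\Pi$ is a \emph{rational} affine subspace whose defining data has controlled height: since the $A_i$ are lattice points with $|a_{ij}|\ls R$ (which we may assume after translating $X_0$, because all relevant lattice points lie within distance $2R$ of each other and hence within a box of side $\ls R$ after translating by one of them), the direction vectors $A_i - A_0$ are integer vectors of size $\ls R$, so by the same elementary-arithmetic / height-function reasoning as in Lemmas \ref{center} and \ref{plane} the $k$-plane $\Pi$ admits a parametrization $X = V_0 + x_{i_1}V_1 + \dots + x_{i_k}V_k$ with $V_j\in\mathbb{Q}^d$ and all heights $\ls R^{c}$.

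Next I would push forward the lattice points. Under the affine map $\phi: (x_{i_1},\dots,x_{i_k}) \mapsto X$, the lattice points of $\mathbb{Z}^d$ lying on $\Pi$ correspond to points in $\mathbb{Q}^k$ with denominators bounded by $R^{c'}$; clearing denominators by a single integer $q\ls R^{c''}$, they correspond to lattice points in $\mathbb{Z}^k$. The image of $RS^k(X_0)$ under $\phi^{-1}$ is a $k$-dimensional ellipsoid (quadric) in $\mathbb{R}^k$ with rational coefficients of height $\ls R^{c'''}$ and with all semi-axes comparable to $R$ (since $\phi$ is bi-Lipschitz with constants depending only on $d$, using $|V_j|\ls R^c$ and the lower bound on the Gram determinant coming from affine independence). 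So after an affine change of variables the problem becomes: bound $\#\mathbb{Z}^k\cap Q$ where $Q\subset\mathbb{R}^k$ is a rational ellipsoid of size $\approx R^{O(1)}$ — but we need the sharp exponent $R^{k-1+\eps}$, not $R^{O(1)}$, so we cannot be cavalier about the size blow-up. This forces a cleaner route: rather than distorting, slice. Fix the last coordinate; each slice $x_k = j$ (for $\ls R$ integer values of $j$, after a rational affine change making one axis integer-aligned) intersects the ellipsoid in a $(k{-}1)$-dimensional rational ellipsoid of size $\ls R$, hence — by the inductive hypothesis applied in dimension $k{-}1$ (a $(k{-}1)$-sphere/ellipsoid carries $\ls R^{k-2+\eps}$ lattice points) — contributes $\ls R^{k-2+\eps}$ points; summing over the $\ls R$ slices gives $\ls R^{k-1+\eps}$.

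The main obstacle, and the step I would be most careful about, is the base case and the slicing bookkeeping: for $k=1$ I need $\#\mathbb{Z}^2 \cap RS^1(X_0) \ls R^\eps$ for a circle with rational center of height $R^{O(1)}$, which follows from writing the circle equation as $|z-X_0|^2 = R^2$, clearing denominators to get an equation $|N z - M|^2 = R'^2$ over $\mathbb{Z}$ with $R' \ls R^{O(1)}$, and invoking the sum-of-two-squares divisor bound $r_2(n)\ls n^\eps$ — here the polynomial height blow-up $R\mapsto R^{O(1)}$ is harmless since it only changes $\eps$. For the inductive step the delicate point is ensuring each hyperplane slice can be taken to be \emph{integer} (not merely rational) after a bounded-height change of coordinates, so that "$\ls R$ slices" is literally correct; this is handled by choosing the coordinate $x_k$ to correspond, via $\phi$, to an actual coordinate direction in the original $\mathbb{Z}^d$, so the slices $x_k = j$ are genuine integer translates and number $\ls R$. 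Once these two points are pinned down, the induction closes and gives the claimed bound $\ls R^{k-1+\eps}$ with constant independent of $X_0$, $R$, and the embedding.
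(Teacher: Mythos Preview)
Your overall strategy matches the paper's: reduce the $k$-sphere to circles by fixing $k-1$ ambient integer coordinates (the paper does this directly in one step, you do it inductively, but the content is identical), and then bound lattice points on each embedded circle by $\ls R^\eps$ via a divisor-type count.

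The gap is in the base case, which you correctly flag as the delicate step but then mishandle. The circle lives in $\mathbb{R}^d$, not $\mathbb{R}^2$, and after parametrizing its containing $2$-plane by two ambient coordinates $(x_{i_1},x_{i_2})$ as in Lemma~\ref{plane}, the equation $|X-X_0|^2=R^2$ pulls back to a \emph{general} positive-definite integral binary quadratic form in $(x_{i_1},x_{i_2})$: the Gram matrix of $V_1,V_2$ is not the identity, so you do not obtain $y_1^2+y_2^2=n$, and the sum-of-two-squares bound $r_2(n)\ls n^\eps$ is not available. The paper deals with this by expanding, completing the square, and reducing to the shape $x^2+Py^2=K$ with $P>0$ squarefree and $|K|\ls R^{c_d}$, then invoking $r_P(K)\le 6\tau(K)\ls K^\eps$. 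Once you substitute this argument (or any equivalent bound for integral points on a rational conic of bounded height) for the $r_2$ step, your induction closes exactly as you describe.
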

\begin{proof}
   By the translation invariance of $\mathbb{Z}^d$, we may assume $X_0\in[-1,1]^d$, which implies that \[|x_j|\ls R,\ 1\le j\le d\] for any point $X=(x_1,...,x_d)\in R S^k(X_0)$. To count the lattice points in $R S^k(X_0)$, we can restrict $(k-1)$ coordinates of $X$ to some integers with absolute values $\ls R$ (there are $\ls R^{k-1}$ choices in total), and reduce to counting lattice points on the circles. So it suffices to prove the case $k=1$:
  \[\#\mathbb{Z}^d\cap R S^1(X_0)\ls R^{\eps},\ \forall\eps>0\]
  where the constant is independent of the embedding, $X_0$ and $R$.
  We may assume that there are at least $3$ lattice points on the circle $R S^1(X_0)$, otherwise we are done. By Lemma \ref{center}, the center $X_0\in\mathbb{Q}^d$, and each component has height $\ls R^c$ for some $c>0$. By Lemma \ref{plane} and a change of variables, the 2-plane containing the circle may have the form: \[X=V_0+x_1V_1+x_2V_2\] where $V_0,V_1,V_2\in\mathbb{Q}^d$ and each of their components has height $\ls R^c$ for some constant $c>0$. To estimate the number of lattice points $X=(x_1,x_2,...,x_d)$ on the circle, we only need to count the number of integer solutions $(x_1,x_2)\in \mathbb{Z}^2$ to the equation
   \[|x_1V_1+x_2V_2+V_0-X_0|^2=R^2.\]
Since all the components of $X_0,\ V_0,\ V_1,\  V_2$ have heights $\ls R^c$, it is equivalent to an equation with integer coefficients:
\[|x_1\tilde V_1+x_2\tilde V_2+\tilde V_0-\tilde X_0|^2=R_1\]
where $\tilde V_1,\ \tilde V_2,\ \tilde V_0,\ \tilde X_0\in \mathbb{Z}^d$ and $|R_1|\ls R^{c_d}$ for some constant $c_d>0$.
Expanding the square
\[Ax_1^2+Bx_2^2+2Cx_1x_2+2Dx_1+2Ex_2+F=0\]
where $A,B,C,D,E\in\mathbb{Z}$, and $F=|\tilde V_0-\tilde X_0|^2-R_1$. These coefficients are all bounded by $R^{c_d}$ for some constant $c_d>0$.
Completing the square gives
\[(AB-C^2)(Ax_1+Cx_2+D)^2+((AB-C^2)x_2+(AE-CD))^2=K\]
where $K=(AE-CD)^2-(AF-D^2)(AB-C^2)$.

  Note that $AB-C^2=|\tilde V_1|^2|\tilde V_2|^2-(\tilde V_1\cdot\tilde V_2)^2>0$. If $AB-C^2=PQ^2$ for some $P,Q\in\mathbb{Z}$ and $P>0$ is square free, then by the following substitutions
  \[x=(AB-C^2)x_2+(AE-CD)\]
  \[y=Q(Ax_1+Cx_2+D)\]
  it is reduced to counting the number of integer solutions $(x,y)\in\mathbb{Z}^2$ to the equation
  \begin{equation}
    x^2+Py^2=K
  \end{equation}
  where $P>0$ is squarefree and $K\in\mathbb{Z}$ satisfies $|K|\ls R^{c_d}$ for some constant $c_d>0$. Namely, we need to estimate the number $r_P(K)$ of representations of an integer $K$ by the quadratic form $x^2+Py^2$. Recall that  (see e.g. \cite[page 32]{Ye2012})
  \[r_P(K)\le 6\tau(K)\]
  where $\tau(K)$ is the number of divisors of $K$ and satisfies $\tau(K)\ls |K|^\eps,\ \forall\eps>0$. See e.g. \cite[page 296]{Apo76} for the divisor bound. Therefore,
   \[\#\mathbb{Z}^d\cap R S^1(X_0)\ls R^{\eps},\ \forall\eps>0.\]
\end{proof}
\noindent\textbf{Proof of \eqref{Tdkrational}.} When $1\le k\le d-2$, by the proof of \eqref{Tdk} we only need to estimate the maximal number of lattice points in the intersection of $k$ bands:
 \[\max_{m\in\mathcal{E}}\#\bigcap_{j=1}^kB_{p_j}(u_j,m)\]
 in \eqref{Tdkpf0}. Since we are assuming $u_1,...,u_k$ are fixed and rational,  the lattice points in the intersection of $k$ unit bands that are $\nu_{k,d}$-transverse must lie in a finite number (depending on $u_1,...,u_k$) of $(d-k)$-dimensional affine planes. Note that the intersection of each $(d-k)$-dimensional affine plane and the sphere $\lambda S^{d-1}$ equals to an embedded lower dimensional sphere $R S^{d-k-1}(X_0)\subset \mathbb{R}^d$ with  center $X_0\in\mathbb{R}^d$ and radius $R\le \lambda$. By Lemma \ref{sphere}, the number of lattice points in $R S^{d-k-1}(X_0)$ is uniformly bounded by $\lambda^{d-k-2+\eps}$, independent of its center. Therefore, the number of lattice points
\[\max_{m\in\mathcal{E}}\#\bigcap_{j=1}^kB_{p_j}(u_j,m)\ls \lambda^{d-k-2+\eps}2^{p_1+\cdot\cdot\cdot +p_k},\ \forall\eps>0\] where the constant depends on $u_1,...,u_k$. So the proof is completed by using \eqref{Tdkpf0}.\\

\noindent\textbf{Proof of \eqref{Tdk1ratonal}. } When $k=d-1$, similarly by the proof of \eqref{Tdk} we can get \[\max_{m\in\mathcal{E}}\#\bigcap_{j=1}^{d-1}B_{p_j}(u_j,m)\ls 2^{p_1+\cdot\cdot\cdot +p_k},\] with fixed rational $u_1,...,u_k$, as any straight line intersects the sphere at no more than 2 points. Arguing along the  proof of \eqref{Tdk} only gives an upper bound of some power of $\log\lambda$. To prove the expected uniform bound, we need to use Lemma \ref{hilbert}. The following argument is the generalization of the proof of \eqref{T2rational} in 2 dimension.

Without loss of generality, we set $\Sigma=(x_1,...,x_{d-1},a_1x_1+...+a_{d-1}x_{d-1})$, $|x_j|\le1$, $|a_j|\le1$, $j=1,...,d-1$. Then
\begin{align*}
  \int_\Sigma|e_\lambda|^2d\sigma&=\sum_m\sum_nc_m\bar{c}_n\int_\Sigma e^{i(m-n)\cdot x}d\sigma\\
  &\ls \Big|\sum_m\sum_nc_m\bar{c}_n\prod_{j=1}^{d-1}\frac{\sin((m_j+a_jm_d)-(n_j+a_jn_d))}{(m_j+a_jm_d)-(n_j+a_jn_d)}\Big|.\\
\end{align*}

For fixed $t_1,...,t_{d-1}$, the straight line determined by $d-1$ equations: $x_j+a_jx_d=t_j$, $j=1,...,d-1$, intersects the sphere $\lambda S^{d-1}$ at at most two points, and they are separated by the hyperplane $a_1x_1+...+a_{d-1}x_{d-1}-x_d=0$ passing through their midpoint. Let
 \[L^+=\{(x_1,...,x_d)\in\mathcal{E}:a_1x_1+...+a_{d-1}x_{d-1}-x_d\ge 0\},\]
 \[L^-=\{(x_1,...,x_d)\in\mathcal{E}:a_1x_1+...+a_{d-1}x_{d-1}-x_d< 0\},\]
 and then $\mathcal{E}=L^+\cup L^-$.

 As in the 2-dimensional proof, by splitting the sum, it suffices to estimate
 \begin{align*}I_1&=\Big|\sum_{m\in L^+}\sum_{n\in L^+}c_m\bar{c}_n\prod_{j=1}^{d-1}\frac{\sin((m_j+a_jm_d)-(n_j+a_jn_d))}{(m_j+a_jm_d)-(n_j+a_jn_d)}\Big|\\
 &\ls \Big(\sum_{m\in L^+}\Big(\sum_{n\in L^+}\bar{c}_n\prod_{j=1}^{d-1}\frac{\sin((m_j+a_jm_d)-(n_j+a_jn_d))}{(m_j+a_jm_d)-(n_j+a_jn_d)}\Big)^2\Big)^\frac12.\end{align*}
 If $a_j=p_j/q_j\in\mathbb{Q}$, $gcd(p_j,q_j)=1$, $|p_j|\le |q_j|$, then for any fixed $t_1,...,t_{d-1}\in \mathbb{Z}$, there is at most one $n\in L^+$ such that $q_jn_j+p_jn_d=t_j$, $j=1,...,d-1$. So we may define $a_{t_1,...,t_{d-1}}:=\bar{c}_n$ if there is one such $n\in L^+$, and  define $a_{t_1,...,t_{d-1}}:=0$, otherwise. Therefore, recalling the operator $T_\mu$ in Lemma \ref{hilbert} we have
 \begin{align*}
   I_1&\ls \Big(\sum_{s_1,...,s_{d-1}\in\mathbb{Z}}\Big(\sum_{t_1,...,t_{d-1}\in\mathbb{Z}}a_{t_1,...,t_{d-1}}\prod_{j=1}^{d-1}\frac{\sin((s_j-t_j)/q_j)}{s_j-t_j}\Big)^2\Big)^\frac12\times\prod_{j=1}^{d-1}|q_j|\\
    &\ls (\sum_{t_1,...,t_{d-1}\in\mathbb{Z}}|a_{t_1,...,t_{d-1}}|^2)^\frac12\prod_{j=1}^{d-1}|q_j|\|T_{1/q_j}\|_{l^2\to l^2}\\
   &\ls \prod_{j=1}^{d-1}|q_j|.
 \end{align*}
 So we get
 \[\int_\Sigma|e_\lambda|^2d\sigma\ls \prod_{j=1}^{d-1}|q_j|.\]

\section{Explicit estimates in 3 dimension}
In this section, we prove Theorem \ref{prop4} on $\mathbb{T}^3$.  Let $e_\lambda(x)=\sum_{n\in \mathcal{E}}c_ne^{in\cdot x}$, $\sum_{n\in\mathcal{E}}|c_n|^2=1$, where $\mathcal{E}=\mathbb{Z}^3\cap \lambda S^2$. By Theorem \ref{prop3a}, it suffices to estimate $A_{1,3,\lambda}$ and $A_{2,3,\lambda}$, and prove that
\[A_{1,3,\lambda}\ls \lambda^{\frac23+\eps},\]
\[A_{2,3,\lambda}\ls \lambda^{\frac16+\eps}.\]Recall that $A_{1,3,\lambda}$ is the maximal number of lattice points in a unit band of $\lambda S^2$, and $A_{2,3,\lambda}$ is the maximal number of lattice points in the intersection of two $\nu$-transverse unit bands of $\lambda S^2$. The main idea is to decompose the bands into identical small portions with at most $\lambda^\eps$ lattice points, and then count the number of the portions. It generalizes a result of Jarnik \cite{Jar1926}, any arc on $\lambda S^1$ of length at most $c\lambda^\frac13$ contains at most 2 lattice points.

Indeed, we can use Lemma \ref{volume} below to conclude that the lattice points in a spherical region of $\lambda S^2$ must be coplanar if the convex hull of the spherical region has volume $< \frac16$. Here the \textbf{convex hull} is the smallest convex set in $\mathbb{R}^3$ that contains the region. For example, the convex hull of four  points in $\mathbb{R}^3$ which are not coplanar is the tetrahedron which has these four points as vertices.

\begin{lemma}\label{volume}
The convex hull of four lattice points that are not coplanar has volume $\geq \frac{1}{6}$.
\end{lemma}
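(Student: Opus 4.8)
The statement to prove is that four lattice points in $\mathbb{Z}^3$ that are not coplanar span a tetrahedron of volume at least $\tfrac16$. The plan is to reduce this to the standard fact that the volume of the tetrahedron with vertices $A_0,A_1,A_2,A_3$ equals $\tfrac16\bigl|\det[A_1-A_0,\,A_2-A_0,\,A_3-A_0]\bigr|$. Since the three difference vectors $A_i-A_0$ lie in $\mathbb{Z}^3$, this determinant is an integer; because the four points are not coplanar the three difference vectors are linearly independent, so the determinant is a nonzero integer, hence has absolute value at least $1$. Therefore the volume is at least $\tfrac16$, which is exactly the claim.

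Concretely I would carry this out in three short steps. First, recall (or prove in one line by a translation and the base-times-height formula, or by iterated integration over the simplex) the determinant formula for the volume of a tetrahedron in $\mathbb{R}^3$. Second, observe that $v_i := A_i - A_0 \in \mathbb{Z}^3$ for $i=1,2,3$, so the $3\times3$ matrix $M=[v_1\ v_2\ v_3]$ has integer entries and thus $\det M \in \mathbb{Z}$. Third, note that "not coplanar" is equivalent to $v_1,v_2,v_3$ being linearly independent, i.e. $\det M \neq 0$; combining with integrality gives $|\det M|\ge 1$ and hence $\mathrm{Vol} = \tfrac16|\det M| \ge \tfrac16$.

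There is essentially no obstacle here: the only thing to be careful about is making sure the notion of "convex hull of four non-coplanar points" is correctly identified with the solid tetrahedron they span (so that its $3$-dimensional volume is the quantity in the determinant formula), and that one invokes the right orientation-independent absolute value so the argument does not depend on the labeling of the points. Both points are routine. The sharpness of the constant $\tfrac16$ is witnessed by the unit tetrahedron with vertices $0, e_1, e_2, e_3$, for which $|\det M| = 1$; this also explains why $\tfrac16$ (rather than a larger constant) is the natural threshold used later when testing whether a spherical region on $\lambda S^2$ forces its lattice points to be coplanar.
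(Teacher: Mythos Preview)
Your proposal is correct and follows essentially the same approach as the paper: both compute the tetrahedron volume as $\tfrac16|\det(A_1-A_0,A_2-A_0,A_3-A_0)|$, note that the entries are integers so the determinant is an integer, and use non-coplanarity to conclude it is nonzero, hence of absolute value at least $1$. Your added remark on sharpness via the unit tetrahedron is a nice complement but not needed for the lemma itself.
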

\begin{proof}
Let $\textbf{a}, \textbf{b}, \textbf{c}, \textbf{d}\in \mathbb{Z}^3$. Then the volume of the convex hull, or the tetrahedron formed by these four points is
\[V=\frac{1}{6}|\operatorname{det}(\mathbf{a}-\mathbf{d}, \mathbf{b}-\mathbf{d}, \mathbf{c}-\mathbf{d})|.\]
Since we know these four points are not coplanar, the determinant must be a non-zero integer, which means  $V\geq \frac16$.
\end{proof}
\begin{corollary}\label{convex}
Let $E$ be a subset of $\lambda S^2$. If the convex hull of $E$ in $\mathbb{R}^3$ has volume $\ls 1$, then the number of lattice points in $E$ is $\ls \lambda^\epsilon$.
\end{corollary}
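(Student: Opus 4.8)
The plan is to cut $E$ into boundedly many pieces on each of which Lemma~\ref{volume} forces all lattice points to be coplanar, and then to apply Lemma~\ref{sphere} (with $d=3$, $k=1$) to each piece.

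First I would treat the basic case $\operatorname{vol}(\operatorname{conv}(E))<\tfrac16$. If $E$ contained four non-coplanar lattice points, the tetrahedron they span would sit inside $\operatorname{conv}(E)$ and, by Lemma~\ref{volume}, have volume $\ge\tfrac16$, a contradiction. Hence all lattice points of $E$ are coplanar, so they lie in a plane section of $\lambda S^2$, that is on a circle $R\,S^1(X_0)$ with $R\le\lambda$ (or on a single point, or none at all). By Lemma~\ref{sphere} — together with the trivial bound when $R\le1$ — the number of such points is $\ls R^{\eps}\ls\lambda^{\eps}$.

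For the general statement $\operatorname{vol}(\operatorname{conv}(E))\le C$, where $C$ may exceed $\tfrac16$, I would subdivide. By John's ellipsoid theorem applied to the compact convex set $\operatorname{conv}(E)$ (the case of empty interior already being covered above, its volume then being $0<\tfrac16$), there is a rectangular box $Q\supset\operatorname{conv}(E)$ with side lengths $a_1\ge a_2\ge a_3$ and $a_1a_2a_3\approx\operatorname{vol}(\operatorname{conv}(E))\ls1$. Slice $\mathbb{R}^3$ by equally spaced planes perpendicular to the shortest side of $Q$, at spacing $w$ small enough that $a_1a_2w<\tfrac16$; then each intersection of $Q$ with one of the slabs is a box of volume $<\tfrac16$. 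Setting $E_i=E\cap(\text{slab}_i)$, we get $E=\bigcup_{i=1}^N E_i$ with each $\operatorname{conv}(E_i)\subset Q\cap(\text{slab}_i)$ of volume $<\tfrac16$, while the number of pieces is
\[
N\ \ls\ \frac{a_3}{w}+1\ \approx\ a_1a_2a_3+1\ \ls\ 1 .
\]
Applying the basic case to each $E_i$ gives $\#(\mathbb{Z}^3\cap E_i)\ls\lambda^{\eps}$, and summing over the $N\ls1$ pieces yields $\#(\mathbb{Z}^3\cap E)\ls\lambda^{\eps}$, with a constant depending only on $C$ and $\eps$, hence independent of $\lambda$ and of the position of $E$.

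The only delicate point is the subdivision. A naive cut — a fixed axis-parallel grid, or a covering by small spherical caps — would generically produce a number of pieces that grows like a positive power of $\lambda$, and would therefore be useless. The key observation is that $\operatorname{vol}(\operatorname{conv}(E))\ls1$ is a rigid constraint on the shape of $E$, and slicing transverse to the thin direction supplied by John's ellipsoid is precisely what keeps the number of pieces $O(1)$; everything else is a direct appeal to the two preceding lemmas.
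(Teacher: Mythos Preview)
Your proof is correct and follows the same strategy as the paper: subdivide into $O(1)$ pieces each with convex hull of volume $<\tfrac16$, apply Lemma~\ref{volume} to force coplanarity of the lattice points in each piece, and finish with Lemma~\ref{sphere}. The only difference is the subdivision device: the paper simply invokes continuity of the volume measure (equivalently, repeated plane bisection, or the ham sandwich theorem) to cut $\operatorname{conv}(E)$ into boundedly many convex pieces of volume $<\tfrac16$, which is lighter than your route through John's ellipsoid and a bounding box --- your version is more explicit about the geometry, but John's theorem is not actually needed here.
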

\begin{proof}
We may use 2-planes to cut the convex hull of $E$ into a finite number ($\ls 1$) of small convex sets of volume $<\frac16$, by the continuity of volume measure (or the ham sandwich theorem \cite{ST1942}). Then the lattice points in each small convex set must be coplanar by Lemma \ref{volume}. Note that the intersection of a plane and the sphere $\lambda S^2$ is a circle with radius $R\le \lambda$. Then the intersection of $\lambda S^2$ and each small convex set has $\ls \lambda^\eps$ lattice points by  Lemma \ref{sphere}. So there are $\ls \lambda^\eps$ lattice points in $E$.
\end{proof}
\begin{corollary}\label{cap14}
  If $E$ is a cap on $\lambda S^2$ with radius $\ls\lambda^\frac14$, then the number of lattice points in $E$ is $\ls \lambda^\epsilon$.
\end{corollary}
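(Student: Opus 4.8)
The plan is to estimate the radius of curvature of $\lambda S^2$ at a given point and use it to bound the volume of the convex hull of a small spherical cap, so that Corollary \ref{convex} applies. Given a cap $E$ on $\lambda S^2$ of radius $r$ centered at a point $p$, I would first observe that $E$ is contained in a slab: the portion of the sphere between the tangent plane at $p$ and a parallel plane at distance $h$ from it, where $h$ is the ``sagitta'' of the cap. Elementary geometry gives $h \approx r^2/\lambda$, since a chord subtending an arc of length $r$ on a circle of radius $\lambda$ departs from the tangent line by $\approx r^2/\lambda$.

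Next I would bound the convex hull of $E$ by the convex hull of that spherical slab, which sits inside a box (or a flat cylinder) of dimensions roughly $r \times r \times h$. Hence the volume of the convex hull of $E$ is $\ls r^2 h \approx r^4/\lambda$. Taking $r \ls \lambda^{1/4}$ makes this quantity $\ls 1$, so by Corollary \ref{convex} the number of lattice points in $E$ is $\ls \lambda^\eps$. That is exactly the claimed bound.

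The only slightly delicate point is to make the geometric estimates uniform in the position and orientation of the cap. But this is automatic: $\lambda S^2$ is a round sphere, so every cap of a given radius is congruent to every other, and the bound $h \approx r^2/\lambda$ and the enclosing-box bound depend only on $r$ and $\lambda$, not on the center $p$. Thus the implied constants in Corollary \ref{convex} (which are already stated there to be independent of the embedding and center) carry through, and the final constant depends only on $\eps$. I expect the main (minor) obstacle to be simply writing the sagitta estimate cleanly — one wants $h = \lambda - \sqrt{\lambda^2 - r^2} \le r^2/\lambda$ for $r \le \lambda$, which follows from $\sqrt{\lambda^2-r^2} \ge \lambda - r^2/\lambda$, i.e. from squaring — and then noting the convex hull of the cap lies within distance $h$ of a 2-plane and within a disk of radius $r$ in that plane, giving the volume bound $\ls r^2 h$.
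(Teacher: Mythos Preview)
Your proposal is correct and follows exactly the paper's approach: the paper's proof is the one-line observation that the convex hull of the $\lambda^{1/4}$-cap has volume $\approx \lambda^{-1/2}\times(\lambda^{1/4})^2\ls 1$, and then invokes Corollary~\ref{convex}. Your sagitta estimate $h\approx r^2/\lambda$ and box bound $r^2h\approx r^4/\lambda$ are precisely this computation written out in detail.
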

\begin{proof}
It follows from Corollary \ref{convex} and the fact that the convex hull of the $\lambda^\frac14$-cap has volume $ \approx\lambda^{-\frac12}\times(\lambda^\frac14)^2\ls 1$.
\end{proof}
We use the following convention in this section. Consider a unit band on $\lambda S^2$. The band is between two parallel planes, and for simplicity we may assume both planes have nonempty intersections with the sphere $\lambda S^2$. Indeed, other cases can be trivially reduced to this case. The boundary of the band consists of two circles, which are the intersections of  the sphere and the two parallel planes. If the largest radius of the circles is $R$, we call the \textbf{radius} of the band is $R$. The distance between the two parallel planes is called the \textbf{width} of the band.  By definition, the width of the unit band is comparable to 1. Moreover, the spherical distance between the two circles is called the \textbf{spherical\ width} of the band. A \textbf{band sector} is a portion of the unit band with respect to a central angle. The \textbf{length of the band sector} is the radius of the band times the central angle.

For example, the unit band in Figure \ref{fig31} below is the set \[\{(x,y,z)\in \mathbb{R}^3: x^2+y^2+z^2=\lambda^2, a\le z\le a+1 \}\] for some value $a$. If the band contains the equator (e.g. $-1<a<0$), we may split it into two bands with respect to the equator. For simplicity, we may only consider the case $a\ge0$, since our goal is counting lattice points on the unit band. It is formed by rotating the arc in Figure \ref{fig32} with respect to the z-axis, and as in Figure \ref{fig32}, the radius of the band is $R=\sqrt{\lambda^2-a^2}$, the spherical width of the band is equal to the length of the arc. Finally, a band sector is formed by rotating the arc with respect to z-axis for some angle $\theta\in[0,2\pi)$, and the length of the band sector is $R\theta$. See Figure \ref{fig33}.
\begin{figure*}[htbp]
  \centering
  \subfigure[A unit band]{
    \includegraphics[width=0.6\textwidth]{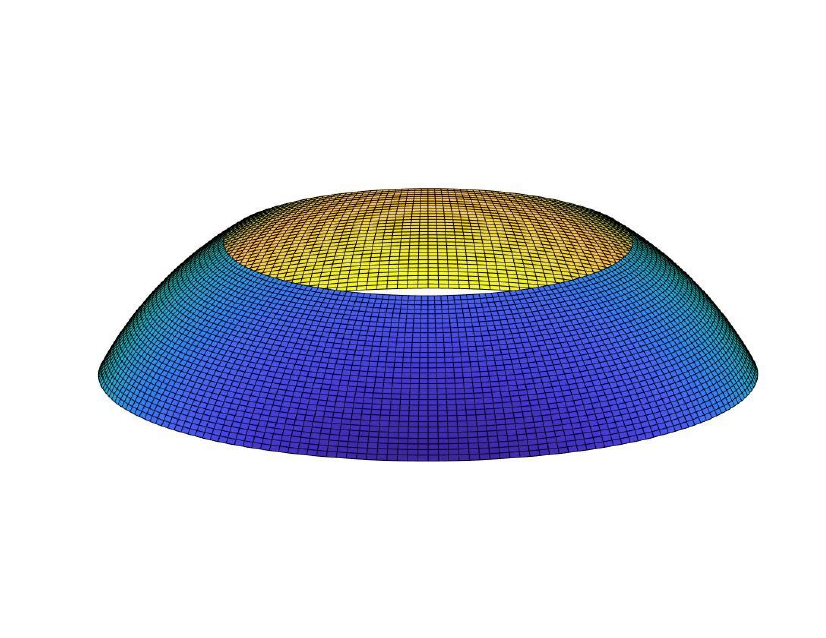}
    \label{fig31}
  }
  \hfill
  \subfigure[The corresponding arc]{
    \includegraphics[width=0.6\textwidth]{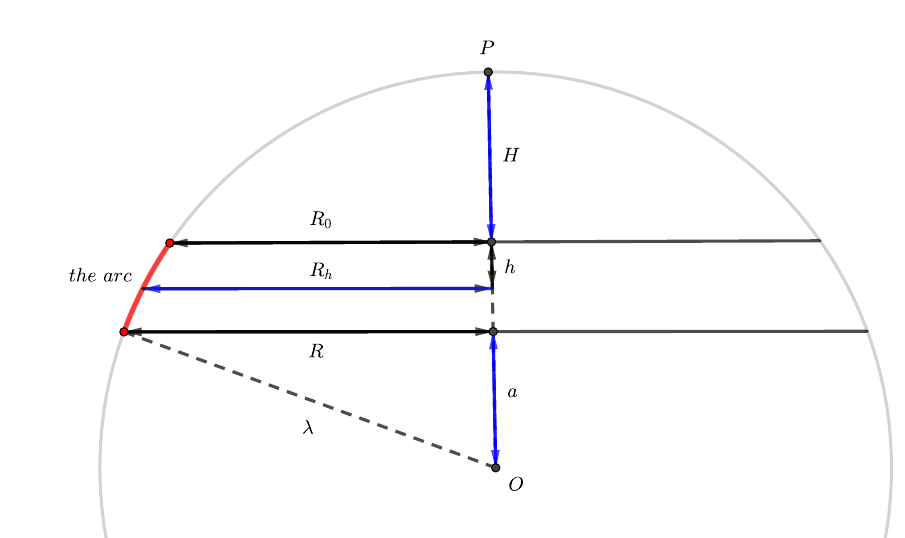}
    \label{fig32}
  }
  \caption{}
  \label{fig3132}
\end{figure*}
\begin{figure*}[htbp]
  \centering
  \subfigure[The band sector $ABB_0A_0$ with the angle $\theta$]{%
    \includegraphics[width=0.5\textwidth]{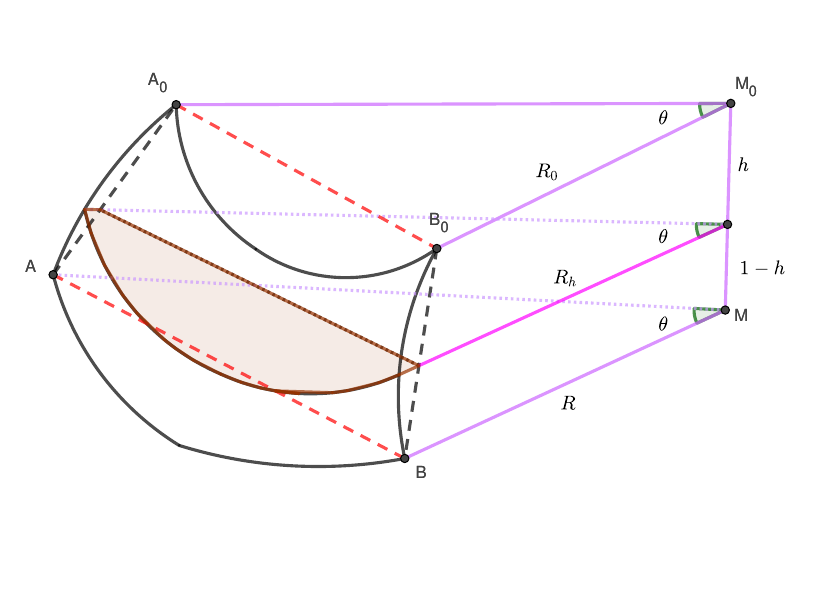}%
    \label{fig33}%
  }%
  \hfill
  \subfigure[A cross section of the convex hull $ABB_0A_0$]{%
    \includegraphics[width=0.5\textwidth]{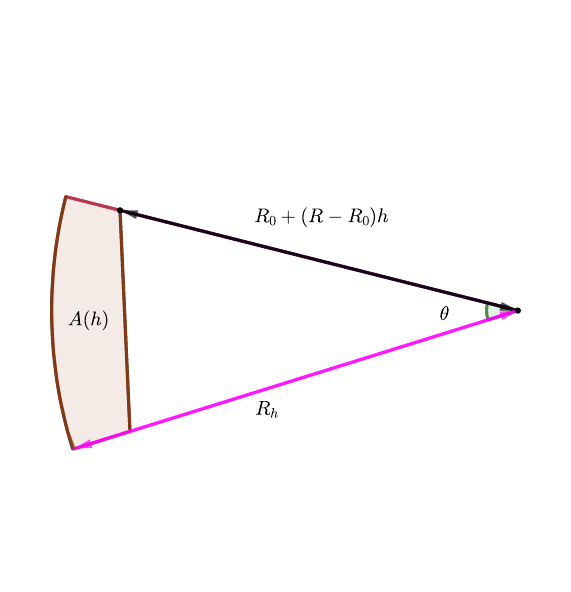}%
    \label{fig34}%
  }%
  \caption{}
  \label{fig3334}
\end{figure*}
\begin{lemma}\label{width}
If a unit band on $\lambda S^2$ has radius $R$, then $R\gs \lambda^\frac12$ and its spherical width $\approx \frac{\lambda}{R}\ls \lambda^\frac12$,  and its area $\approx\lambda$.
\end{lemma}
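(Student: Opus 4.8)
The plan is to work with the explicit spherical picture set up just before the lemma: a unit band on $\lambda S^2$ is (after rotating so the relevant axis is the $z$-axis, and splitting off the equator if necessary) the set $\{(x,y,z):x^2+y^2+z^2=\lambda^2,\ a\le z\le a+1\}$ for some $a\ge 0$. The radius of the band is then $R=\sqrt{\lambda^2-a^2}$, the radius of the circle $\{z=a\}\cap\lambda S^2$, which is the larger of the two boundary circles since $a\ge 0$. First I would record the elementary relations: $0\le a\le \lambda$, so $R$ ranges over $[0,\lambda]$ a priori, but the constraint that \emph{both} planes meet the sphere forces $a+1\le\lambda$, hence $a\le\lambda-1$ and $R=\sqrt{\lambda^2-a^2}=\sqrt{(\lambda-a)(\lambda+a)}\ge\sqrt{\lambda-a}\gtrsim 1$; to get the sharper $R\gtrsim\lambda^{1/2}$ one observes $\lambda^2-a^2\ge\lambda^2-(\lambda-1)^2=2\lambda-1\gtrsim\lambda$, so $R\gtrsim\lambda^{1/2}$.

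Next I would compute the spherical width, i.e.\ the length of the meridian arc between the two boundary circles (the arc pictured in Figure \ref{fig32}). Parametrizing the meridian by $z$, a point at height $z$ is at distance $\sqrt{\lambda^2-z^2}$ from the axis, and arclength along the meridian is $\int_a^{a+1}\sqrt{1+\big(\tfrac{d}{dz}\sqrt{\lambda^2-z^2}\big)^2}\,dz=\int_a^{a+1}\frac{\lambda}{\sqrt{\lambda^2-z^2}}\,dz$. Since $z\in[a,a+1]$ and $a+1\le\lambda$, on this interval $\sqrt{\lambda^2-z^2}$ is comparable to $\sqrt{\lambda^2-a^2}=R$ (the ratio of the values at the two endpoints is bounded above and below by absolute constants, using $\lambda^2-(a+1)^2=R^2-2a-1\ge R^2-2\lambda\ge\tfrac12 R^2$ for $\lambda$ large, since $R^2\ge 2\lambda-1$). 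Hence the integral is $\approx\int_a^{a+1}\frac{\lambda}{R}\,dz=\frac{\lambda}{R}$, giving spherical width $\approx\lambda/R$; and since $R\gtrsim\lambda^{1/2}$ this is $\lesssim\lambda^{1/2}$.

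Finally, for the area of the band I would invoke Archimedes' hat-box theorem: the area of the zone of $\lambda S^2$ between two parallel planes at heights $a$ and $a+1$ equals $2\pi\lambda$ times the vertical extent, i.e.\ exactly $2\pi\lambda\cdot 1=2\pi\lambda\approx\lambda$. (Alternatively this follows from a surface-integral computation identical in spirit to the width computation, the $R$ factors cancelling against the circumference $2\pi\sqrt{\lambda^2-z^2}$.)

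I do not anticipate a genuine obstacle here: the lemma is a direct computation once the normalization $a\ge 0$, ``both planes meet the sphere'' is in force. The only point requiring a little care is justifying the comparison $\sqrt{\lambda^2-z^2}\approx R$ uniformly for $z\in[a,a+1]$ — this is where one uses that the band has unit width while $R^2\gtrsim\lambda$, so that moving $z$ by $O(1)$ changes $\lambda^2-z^2$ by $O(\lambda)=O(R^2)$, a relatively small perturbation. That estimate is what makes both the ``spherical width $\approx\lambda/R$'' claim and (if one does it by hand) the ``area $\approx\lambda$'' claim go through, and it is the one step I would write out carefully rather than dismiss as routine.
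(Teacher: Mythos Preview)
Your approach is sound and somewhat different from the paper's. For the spherical width the paper instead estimates the chord length $l=\sqrt{1+(R-R_0)^2}$ between the two boundary circles (with $R_0=\sqrt{\lambda^2-(a+1)^2}$), splitting into the cases $R\approx\lambda$ and $R\le\tfrac12\lambda$; for the area the paper simply multiplies length $\approx R$ by spherical width $\approx\lambda/R$. Your integral-and-Archimedes route is at least as clean, and Archimedes gives the area exactly rather than up to constants.

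There is, however, a slip in precisely the step you singled out as delicate. The inequality $R^2-2\lambda\ge\tfrac12R^2$ is equivalent to $R^2\ge4\lambda$, which need not hold: you only know $R^2\ge2\lambda-1$, and at the extreme $a=\lambda-1$ one has $R^2=2\lambda-1$ while $\lambda^2-(a+1)^2=0$. So $\sqrt{\lambda^2-z^2}$ is \emph{not} uniformly comparable to $R$ on $[a,a+1]$ in general; the integrand blows up as $z\to\lambda$. The conclusion is still correct and the fix is easy: when $R^2\ge4\lambda$ your argument works verbatim, and when $R^2<4\lambda$ (so $a>\lambda-2$ and $R\approx\lambda^{1/2}$) one can substitute $z=\lambda-u$, use $\lambda^2-z^2=2\lambda u-u^2\approx\lambda u$ for $u\in[\lambda-a-1,\lambda-a]\subset[0,2]$, and obtain $\int_a^{a+1}\lambda(\lambda^2-z^2)^{-1/2}\,dz\approx\sqrt{\lambda}\int u^{-1/2}\,du\approx\sqrt{\lambda}\approx\lambda/R$. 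This small case split plays exactly the role of the paper's $R\approx\lambda$ versus $R\le\tfrac12\lambda$ dichotomy.
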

\begin{proof}
From the discussion above, we know the spherical width of the band is equal to the arc length in Figure \ref{fig32},  which is comparable to the length of the chord:
\begin{align*}
l=\sqrt{1+(R-R_0)^2}.
\end{align*}
Since
\[R_0^2=\lambda^2-(1+\sqrt{\lambda^2-R^2})^2\geq 0\] we have
\[|R-R_0|=\frac{1+2\sqrt{\lambda^2-R^2}}{R+R_0}\approx \frac{1+2\sqrt{\lambda^2-R^2}}{R}.\]
When $R\approx \lambda $, we have $|R-R_0|\ls 1$ and $l\approx 1\approx \frac{\lambda}{R}$. When $R\le\frac1{2}  \lambda $, we have $|R-R_0|\approx \frac{\lambda}{R}$ and $l\approx \frac{\lambda}{R}$. Thus the spherical width $\approx \lambda/R$. As in Figure \ref{fig32},
\[R=\sqrt{\lambda^2-(\lambda-H-1)^2}\ge \sqrt{\lambda^2-(\lambda-1)^2}\approx \lambda^\frac12,\]
 then the spherical width $\ls \lambda^\frac12$.

Since the band has length $\approx R$,
\[{\rm  Area\ of\ the\ band} \approx {\rm length}\times {\rm spherical\  width}\approx R\times \frac{\lambda}{R}=\lambda\]
which is independent of $R$.
\end{proof}
\begin{lemma}\label{sector}
  Let $E$ be a band sector on $\lambda S^2$ with radius $R$ and central angle $\theta$. If $\theta\approx R^{-\frac23}$ and $R\gs \lambda^\frac34$, then the volume of its convex hull $\ls 1$.
\end{lemma}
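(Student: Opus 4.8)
The plan is to prove Lemma \ref{sector} by exhibiting an explicit rectangular box of volume $\ls 1$ that contains the band sector $E$; since a box is convex, $\mathrm{conv}(E)$ is contained in that box, and the bound on its volume follows.

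\textbf{Setup.} Following the conventions of this section, after a rotation I may assume the unit band is $\{x\in\lambda S^2:a\le x_3\le a+1\}$ with $a\ge 0$, and that in cylindrical coordinates the sector is $E=\{P(\phi,z):|\phi|\le\theta/2,\ a\le z\le a+1\}$, where $P(\phi,z)=(r(z)\cos\phi,\ r(z)\sin\phi,\ z)$ and $r(z)=\sqrt{\lambda^2-z^2}$. Write $R=r(a)$ and $R_0=r(a+1)$ for the radii of the two boundary circles. A first observation, used repeatedly, is that the hypothesis $R\gs\lambda^{3/4}$ forces the band to be thin compared with its radius: from $R_0^2=R^2-2a-1\ge R^2-3\lambda$ and $R^2\gs\lambda^{3/2}\gg\lambda$ we get $R_0\approx R$, hence $r(z)\approx R$ for every $z\in[a,a+1]$. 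By Lemma \ref{width} the spherical width of the band is $\approx\lambda/R$.

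\textbf{Choice of frame and the main estimate.} Let $p_0=P(0,a+\tfrac12)\in\lambda S^2$ be the ``center'' of the sector, and let $\mathbf n=p_0/\lambda$ be the outward unit normal there, $\mathbf t=(0,1,0)$ the azimuthal tangent, and $\mathbf m=\mathbf n\times\mathbf t$ the meridional tangent, so $\{\mathbf t,\mathbf m,\mathbf n\}$ is orthonormal. The heart of the argument is to bound the three coordinates of $P(\phi,z)-p_0$ in this frame uniformly over $E$; this is the only computational step, and it amounts to Taylor-expanding $r(z)$ about $z=a+\tfrac12$ and $\cos\phi,\sin\phi$ about $\phi=0$. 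One finds
\[|(P-p_0)\cdot\mathbf t|=|r(z)\sin\phi|\ \ls\ R\theta,\qquad |(P-p_0)\cdot\mathbf m|\ \ls\ \lambda/R,\]
the second being the spherical width up to a constant, and for the normal component — which picks up one contribution $\approx\lambda/R^2$ from the curvature of the sphere across the width of the band and one contribution $\approx R^2\theta^2/\lambda$ from the curvature of the horizontal circle of radius $r(z)$ over the central angle $\theta$ —
\[|(P-p_0)\cdot\mathbf n|\ \ls\ \frac{\lambda}{R^2}+\frac{R^2\theta^2}{\lambda}\ \approx\ \frac{\lambda}{R^2}+\frac{R^{2/3}}{\lambda},\]
using $\theta\approx R^{-2/3}$ in the last step. (The cross terms coming from the product $\cos\phi\cdot r(z)$ are of lower order, using $a\le\lambda$, $R\le\lambda$ and $R\gs\lambda^{3/4}$.) Consequently $E$, hence $\mathrm{conv}(E)$, lies in the box $\{p:|(p-p_0)\cdot\mathbf t|\ls R\theta,\ |(p-p_0)\cdot\mathbf m|\ls\lambda/R,\ |(p-p_0)\cdot\mathbf n|\ls\lambda/R^2+R^{2/3}/\lambda\}$, whose volume is
\[\ls\ R\theta\cdot\frac{\lambda}{R}\cdot\Big(\frac{\lambda}{R^2}+\frac{R^{2/3}}{\lambda}\Big)=\frac{\lambda^2\theta}{R^2}+\theta R^{2/3}\ \approx\ \frac{\lambda^2}{R^{8/3}}+1,\]
and since $R\gs\lambda^{3/4}$ gives $R^{8/3}\gs\lambda^2$, the right-hand side is $\ls1$, as claimed.

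\textbf{Main obstacle.} I expect the delicate point to be the normal-direction estimate together with recognizing \emph{why} $R\gs\lambda^{3/4}$ is exactly the right hypothesis: it is precisely the condition under which the ``long'' direction of the sector, of length $R\theta\approx R^{1/3}$, is at least as large as the spherical width $\lambda/R$ (equivalently $R^{4/3}\gs\lambda$), and it is also the condition that balances the two terms $\lambda^2/R^{8/3}$ and $1$ in the box volume. Once everything is organized around the frame $\{\mathbf t,\mathbf m,\mathbf n\}$, the remaining work — the reduction $R_0\approx R$, the Taylor expansions, and the verification that the cross terms are negligible — is routine bookkeeping. The lemma would then be combined with Corollary \ref{convex} to conclude that such a band sector contains $\ls\lambda^\eps$ lattice points.
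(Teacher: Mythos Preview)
Your argument is correct. The paper takes a different route: rather than enclosing $E$ in a box, it computes the volume of a specific convex body bounded by the sector, the chord plane $ABB_0A_0$, the two parallel horizontal planes, and the two radial planes, by integrating the cross--sectional area $A(h)$ (a circular segment minus a triangle) over $h\in[0,1]$; the cancellation between $\theta$ and $\sin\theta$ in that integral produces the two terms $\theta^3 R^2$ and $\theta\lambda^2/R^2$, which are exactly your $\theta R^{2/3}$ and $\lambda^2\theta/R^2$ after substituting $\theta\approx R^{-2/3}$. Your bounding--box method is more transparent: choosing the frame $\{\mathbf t,\mathbf m,\mathbf n\}$ makes the three relevant length scales (azimuthal length $R\theta$, spherical width $\lambda/R$, and normal sag $\lambda/R^2+R^2\theta^2/\lambda$) appear directly, with no delicate cancellation, and it explains cleanly why the threshold $R\gs\lambda^{3/4}$ arises as the balance point $\lambda^2/R^{8/3}\approx 1$. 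The paper's computation, on the other hand, tracks the convex hull more precisely and would more readily give sharper constants or handle variants where the two contributions are not of the same order. Both arrive at the identical final bound $V\ls 1+\lambda^2/R^{8/3}\ls 1$.
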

\begin{proof}
  As shown in Figure \ref{fig33}, the convex hull of the band sector $ABB_0A_0$ is formed by the plane $ABB_0A_0$, two parallel planes ($A_0B_0M_0$ and $ABM$), two intersecting planes ($AA_0M$ and $BB_0M$), and the band sector. To compute the volume of the convex hull, it is convenient to consider its cross section first. In Figure \ref{fig33}, the cross section is a part of the circular sector with radius $R_h$ and angle $\theta$, where an isosceles triangle is removed. A direct computation on the right trapezoid $AMM_0A_0$ (or $BMM_0B_0$) shows that the two legs of this isosceles triangle have equal length $R_0+(R-R_0)h$. See Figure \ref{fig34}). Now  we compute the area of the cross section (the shaded region in Figure \ref{fig34}), and then integrate over $h\in[0,1]$. From Figure \ref{fig32}, we have \[R_h^2=\lambda^2-(\lambda-H-h)^2.\]
  Here we have $H\le \lambda-1$ and
  \[H=\lambda-1-\sqrt{\lambda^2-R^2}\approx \lambda^{-1}R^2\gs \lambda^\frac12\]
  by the assumption that $R\gs \lambda^\frac34$.
  Then the area of the shaded region in Figure \ref{fig34} is
  \[A(h)=\frac12\theta(\lambda^2-(\lambda-H-h)^2)-\frac12(R_0+(R-R_0)h)^2\sin\theta.\]
  Note that
  \begin{align*}&R^2=\lambda^2-(\lambda-H-1)^2=2\lambda H-H^2+2\lambda-2H-1\\
  &R_0^2=\lambda^2-(\lambda-H)^2=2\lambda H-H^2\end{align*}
\[(2\lambda H-H^2+\lambda-H)-RR_0=\frac{\lambda^2}{RR_0+(2\lambda H-H^2+\lambda-H)}\ls \lambda H^{-1}.\]
  Recall that $\sin\theta\ge\theta-\frac16\theta^3$ when $0\le\theta<1$.
  Thus the volume of the convex hull is
  \begin{align*}
    V&=\int_0^1A(h)dh=\frac\theta2(2\lambda H-H^2+\lambda-H-\tfrac13)-\frac{\sin\theta}2\frac13(R^2+R_0^2+RR_0)\\
    &\le \frac\theta2(2\lambda H-H^2+\lambda-H)-\frac{1}2(\theta-\frac16\theta^3)(2\lambda H-H^2+\lambda-H+O(\lambda H^{-1}))\\
    &\ls \theta^3\lambda H+\theta\lambda H^{-1}\\
    &\approx \theta^3R^2+\theta\lambda^2 R^{-2}\\
    &\approx 1
  \end{align*}by the assumption that $\theta\approx R^{-\frac23}$ and $R\gs \lambda^\frac34$.
\end{proof}

\noindent\textbf{Proof of \eqref{T3k1n}.} We need to count the maximal number of lattice points on a unit band on $\lambda S^2$. Let $R$ be the radius of the unit band. We consider 3 cases.

\noindent(1) If $R\ls \lambda^\frac12$, then the band can be covered by one $\lambda^\frac12$-cap, which has $\ls \lambda^\frac12$ lattice points by \eqref{N2}.

\noindent(2) If $\lambda^\frac12\ls R\ls \lambda^\frac34$, then the length of the band is $\approx R\gs \lambda^\frac12$ and the spherical width $\gs \lambda^\frac14$ by Lemma \ref{width}. So we can efficiently cover the band by caps with radius $\approx \lambda^\frac14$. Recall that the area of the band is $\approx \lambda$ by Lemma \ref{width}. Thus, the number of covering  caps is $\approx \lambda^\frac12$ and each cap has $\ls \lambda^\eps$ lattice points by Corollary \ref{cap14}, so the band contains $\ls \lambda^{\frac12+\eps}$ lattice points.

\noindent(3) If $R\gs \lambda^\frac34$, then we may decompose the band into identical band sectors with the central angle $\theta\approx R^{-\frac23}$. The number of the band sectors is $\approx R^\frac23$ and each band sector has $\ls \lambda^\eps$ lattice points by Lemma \ref{sector} and Corollary \ref{convex}, so the band contains $\ls \lambda^{\frac23+\eps}$ lattice points.

So we finish the proof of \eqref{T3k1n}.
\begin{remark}{\rm
 From the proof, we see that the worst case is the case (3), where the unit band is close to the equator. If one can get an improvement in this case, the bound in \eqref{T3k1n} can be improved.}
\end{remark}

As in Figure \ref{fig32}, a normal line of the parallel planes of the band may pass through the origin $O$ and intersect the sphere $\lambda S^2$ at two points. We denote the point nearest to the band by $P$ and it is called the \textbf{north pole} of the band (see Figure \ref{fig32}). The normal line $OP$ is called the \textbf{axis of the band}.
\begin{lemma}\label{pole}
  If the radius of a unit band is $R$, then the spherical distance between its north pole $P$ and any point on the band is $\ls R$.
\end{lemma}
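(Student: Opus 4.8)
The plan is to pick coordinates adapted to the band and reduce the claim to the elementary fact that $\phi\lesssim\sin\phi$ for $\phi\in[0,\tfrac\pi2]$.

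First I would normalize. After a rotation we may take the axis $OP$ of the band to be the $z$-axis, so that $P=(0,0,\lambda)$ and, following the convention set up before the lemma, the band is
\[\mathcal{B}=\{X\in\lambda S^2:\ a\le X_3\le a+1\},\qquad 0\le a\le\lambda-1\]
(the cases where a bounding plane misses the sphere, or where $-1<a<0$, reduce to this one exactly as indicated in the text). For $X\in\mathcal{B}$ the cosine of the central angle $\angle(OP,OX)$ equals $X_3/\lambda\in[a/\lambda,(a+1)/\lambda]$, so the spherical distance from $P$ to $X$ is largest when $X_3=a$, i.e.\ when $X$ lies on the wider of the two boundary circles; that circle has radius exactly $R=\sqrt{\lambda^2-a^2}$, which is the radius of the band.

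Hence it suffices to bound $\lambda\phi$, where $\phi=\arccos(a/\lambda)\in[0,\tfrac\pi2]$ is the central half-angle subtended by that circle, and where $R=\lambda\sin\phi$. Since $\sin$ is concave on $[0,\tfrac\pi2]$, Jordan's inequality gives $\sin\phi\ge\tfrac2\pi\phi$, so the spherical distance from $P$ to any point of $\mathcal{B}$ is at most $\lambda\phi\le\tfrac\pi2\,\lambda\sin\phi=\tfrac\pi2R\lesssim R$. Equivalently, the chord satisfies $|PX|^2=R^2+(\lambda-a)^2\le 2R^2$ since $(\lambda-a)^2\le(\lambda-a)(\lambda+a)=R^2$, and one then passes from chord length to arc length on $\lambda S^2$ at the cost of a factor $\le\tfrac\pi2$.

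There is essentially no real obstacle here; the only point worth noting is that the arc length from the pole to the band, namely $\lambda\phi$, and the band radius $R=\lambda\sin\phi$ are a priori different quantities, and they are comparable precisely because the standing convention $a\ge0$ forces $\phi\le\tfrac\pi2$ — which is exactly why the text arranged to split any band at the equator beforehand.
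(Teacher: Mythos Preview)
Your proof is correct and is essentially the paper's argument in different notation: both reduce to bounding the chord $|PX|=\sqrt{R^2+(\lambda-a)^2}$ from the pole to the wider boundary circle and then observing $\lambda-a\lesssim R$ (the paper writes $\lambda-a=H+1$ and gets this via $H\lesssim\lambda^{-1}R^2\lesssim R$, which is the same inequality). Your Jordan-inequality phrasing $\lambda\phi\le\tfrac\pi2\lambda\sin\phi=\tfrac\pi2R$ is just the arc-vs-chord version of the same estimate.
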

\begin{proof}
  From Figure \ref{fig32}, the spherical distance between the north pole $P$ and the point on the band is $\ls \sqrt{(H+1)^2+R^2}$. Then the lemma follows from the fact that
   \[H\le\lambda-\sqrt{\lambda^2-R^2}\ls \lambda^{-1}R^2\ls R.\]
\end{proof}
\noindent\textbf{Proof of \eqref{T3k2n}.}
  Fix $\nu=\nu_{2,3}=\frac12$. Let $\mathcal{B}$ and $\mathcal{B}'$ be two $\nu$-transverse unit bands on $\lambda S^2$. Let $P$, $P'$ be the north poles of the two bands. Let $\alpha\in(0,\frac{\pi}2]$ be the angle between the axes of the two bands. Assume that $\mathcal{B}\cap\mathcal{B}'\ne\emptyset$. By transversality, the angle $\alpha\gs 1$ and then
  \[d(P,P')\approx \lambda\]
  where $d(\cdot,\cdot)$ is the spherical distance on $\lambda S^2$. This implies that at least one of the bands (called $\mathcal{B}$) must have radius $\approx \lambda$ and spherical width $\approx 1$. Indeed, if $x_0\in \mathcal{B}\cap\mathcal{B}'$, then by the triangle inequality
  \[d(x_0,P)+d(x_0,P')\ge d(P,P')\approx \lambda.\]
  So at least one term on the left hand side is comparable to $\lambda$. By Lemma \ref{pole}, at least one of the bands must have radius $\approx \lambda$. Then by Lemma \ref{width}, its spherical width $\approx 1$.

  The intersection $\mathcal{B}\cap\mathcal{B}'$ is contained in one band sector of $\mathcal{B}$ with length $\ls \lambda^\frac12$, since the spherical width of  $\mathcal{B}'$ is $\ls \lambda^\frac12$ by Lemma \ref{width}. As in the third case of the proof of \eqref{T3k1n}, we decompose the band $\mathcal{B}$ into $\approx \lambda^\frac23$ small identical band sectors with the central angle $\theta\approx \lambda^{-2/3}$.
  The length of each band sector is $\approx \lambda^\frac13$, so we can cover $\mathcal{B}\cap\mathcal{B}'$ by $\ls\lambda^\frac16$ these band sectors. By Lemma \ref{sector}, each band sector has $\ls \lambda^\eps$ lattice points. So the intersection $\mathcal{B}\cap\mathcal{B}'$ contains $\ls \lambda^{\frac16+\eps}$ lattice points. This completes the proof of \eqref{T3k2n}.

\begin{remark}{\rm Note that the arclength $\lambda^\frac13$ in Jarnik's result \cite{Jar1926} is not optimal. Indeed, Cilleruelo and C\'ordoba \cite{CC1992} proved that for any $\delta<\frac12$, arcs  of length $\lambda^\delta$ contain at most $M(\delta)$ lattice points and in \cite{CG2007} it is conjectured that this remains true for any $\delta<1$. If we consider the generalization in $\lambda S^2$, we may naturally expect that the length of each small band sector in the proof of \eqref{T3k1n} can be chosen to be larger than $R^\frac13$ and it still contains $\ls \lambda^\eps$ lattice points. Clearly this will lead to better estimates on $A_{1,3,\lambda}$ and $A_{2,3,\lambda}$.
  }
\end{remark}

\bibliographystyle{plain}

\end{document}